\theoremstyle{plain}
\newtheorem{theorem}{Theorem}[section]
\newtheorem{lemma}{Lemma}[section]
\newtheorem{proposition}{Proposition}[section]
\newtheorem{conjecture}{Conjecture}[section]
\theoremstyle{definition}
\newtheorem{definition}{Definition}[section]
\newtheorem{example}{Example}[section]
\newtheorem{remark}{Remark}[section]
\numberwithin{equation}{section}
\newcommand*{\Zset}{\mathbb{Z}}  
\newcommand*{\Rset}{\mathbb{R}}  
\newcommand*{\Cset}{\mathbb{C}}
\newcommand*{\mx}[1]{\bm{#1}}
\newcommand*{\ltriplevert}{\left|\mkern-2mu\left|\mkern-2mu\left|}
\newcommand*{\rtriplevert}{\right|\mkern-2mu\right|\mkern-2mu\right|}
\newcommand*{\matrixnorm}[1]{\ltriplevert #1\rtriplevert}
\newcommand*{\abs}[1]{\lvert#1\rvert}
\begin{document}
\title{\textbf{On the eigenvalues of combined meet and join matrices}}
\author{}
\date{\today}
\maketitle
\begin{center}
\textsc{Mika Mattila}\\
School of Information Sciences\\
FI-33014 University of Tampere, Finland\\[5mm]
\end{center}
\begin{abstract}
In this article we give bounds for the eigenvalues of a matrix, which can be seen as a common generalization of meet and join matrices and therefore also as a generalization of both GCD and LCM matrices. Although there are some results concerning the factorizations, the determinant and the inverse of this so-called combined meet and join matrix, the eigenvalues of this matrix have not been studied earlier. Finally we also give a nontrivial lower bound for a certain constant $c_n$, which is needed in calculating the above-mentioned eigenvalue bounds in practice. So far there are no such lower bounds to be found in the literature.

\end{abstract}
\emph{Key words and phrases:}\\Meet matrix, join matrix, GCD matrix, LCM matrix, Smith determinant\\
\emph{AMS Subject Classification:} 11C20, 15B36, 06B99\\[5mm]
\textit{Tel.:}\ +358\ 50\ 318\ 5881\\
\emph{E-mail address:}\ mika.mattila@uta.fi

\newpage

\section{Introduction}

The concept of a meet matrix was first defined by Indian mathematician Bhat  in 1991 \cite{bhat}, whereas join matrices first appeared in a paper by Korkee and Haukkanen in 2003 \cite{haukkanen4}. There are also many other papers about these matrices by Haukkanen and Korkee, see e.g. the references in \cite{MH}. Meet and join matrices were also studied by Hong and Sun in 2004 \cite{hong2}. Both concepts are natural generalizations of GCD and LCM matrices presented by Smith as early as in 1875 \cite{smith}. The definitions are as follows: Assume that $(P,\preceq)$ is a locally finite lattice, $f$
is a real or complex-valued function on $P$ and $S=\{x_1,x_2,\ldots,x_n\}$ is a finite set of distinct elements of $P$ such that
\begin{equation}\label{eq:condition1}
x_i\preceq x_j\Rightarrow i\leq j.
\end{equation}
The $n\times n$ matrix having $f(x_i\wedge x_j)$ as its $ij$ element is the \emph{meet matrix} of the set $S$ with respect to $f$ and is denoted by $(S)_f$. Similarly, the $n\times n$ matrix having $f(x_i\vee x_j)$ as its $ij$ element is the \emph{join matrix} of the set $S$ with respect to $f$ and is denoted by $[S]_f$. When $(P,\preceq)=(\Zset_+,|)$, where $|$ stands for the usual divisor relation of positive integers, the matrices $(S)_f$ and $[S]_f$ are referred to as the GCD and LCM matrices of the set $S$ with respect to $f$. Another simple but important special case of meet and join matrices are MIN and MAX matrices, which are obtained when $(P,\preceq)$ is a chain. The MIN matrix of size $n\times n$ with $\min(i,j)$ as its $ij$ element has been studied by Bhatia \cite{Bh}, for example, and this matrix can easily be seen as a meet matrix by setting $(P,\preceq)=(\Zset_+,\leq)$, $S=\{1,2,\ldots,n\}$ and $f(m)=m$ for all $m\in\Zset_+$.

There are several possible ways to further generalize the concept of meet and/or join matrices. One way to do this is to consider two sets instead of one set $S$ (see \cite{ATH, MH}); another is to replace the function $f$ with $n$ functions $f_1,\ldots,f_n$ (see \cite{MH2}). Korkee \cite{ISmo} defines yet another distinct generalization: a combined meet and join matrix $\mx M_{S,f}^{\alpha, \beta, \gamma, \delta}$. What is special in this generalization is that it yields both meet and join matrices as its special cases, whereas the other generalizations yield only one of the two.

Although the structure, the determinant and the inverse of the matrix $\mx M_{S,f}^{\alpha, \beta, \gamma, \delta}$ were studied by Korkee \cite{ISmo}, there are currently no results concerning the eigenvalues of the general form of this matrix. Our main goal with this paper is to improve this situation. The task, however, is not very easy. Already in the case of more specific GCD and LCM matrices accessing the asymptotic behaviour of the eigenvalues of these matrices requires some rather complicated methods, see e.g. \cite{H08, HLe08, HLo11}. In order to study the eigenvalues of much more general matrix $\mx M_{S,f}^{\alpha, \beta, \gamma, \delta}$ we need to use at least as complicated methods at a more abstract level.

When studying a generalization of a matrix class, it is sometimes possible to extend some methods and results to consider the larger class (at least by making suitable assumptions). When Hong and Loewy obtained a lower bound for the smallest eigenvalue of certain GCD matrices (see \cite[Theorem 4.2]{hong}); soon afterwards Ilmonen et al. \cite{IHM} generalized this result to meet and join matrices. In this article, we show that, under certain circumstances, this method can be extended for the much more general matrix $\mx M_{S,f}^{\alpha, \beta, \gamma, \delta}$. The same goes for another method developed by Ilmonen et al.: see \cite[Theorem 4.1 and Theorem 6.1]{IHM}. This is done in Sections 3 and 4.

In Section 5 we turn our attention to the special constants $c_n$ originally defined by Hong and Loewy. Currently, no lower bounds are known for this constant for general $n$, which means that some of the results in \cite{hong} and in \cite{IHM} cannot be applied in practice at all. It turns out that we were able to contribute something to this topic as well, in this article. 
 
\section{Preliminaries}

Throughout this paper, $(P,\preceq)$ is a locally finite lattice, $f$
is either a real or a complex-valued function on $P$ and $S=\{x_1,x_2,\ldots,x_n\}$ is a finite set of distinct elements of $P$ such that
\begin{equation}\label{eq:condition2}
x_i\preceq x_j\Rightarrow i\leq j.
\end{equation}
In Proposition \ref{esityslause1} and in Theorem \ref{omatulos3} we also assume that $P$ has $\hat{0}$ as its smallest element, and in Proposition \ref{aat-lause2} and in Theorem \ref{omatulos4} $P$ is supposed to have the largest element $\hat{1}$. These assumptions may, however, sound more restricting than they in fact are. If $P$ does not have the smallest or the largest element, we may always restrict ourselves to the finite interval
\[
\boldsymbol{\lsem}\wedge S, \vee S\boldsymbol{\rsem}=\{z\in P\,\big|\,\wedge\hspace{-0.5mm} S\preceq z\preceq\vee S\boldsymbol\},
\]
see e.g. \cite[Section 2]{MH}. Furthermore, the set $S$ is said to be \emph{meet closed} if $x_i\wedge x_j\in S$ for all $x_i,x_j\in S$, or in other words, if the structure $(S,\preceq)$ is a meet semilattice. Similarly the set $S$ is \emph{join closed} if $x_i\vee x_j\in S$ for all $x_i,x_j\in S$ (i.e. $(S,\preceq)$ is a join semilattice).

Next let us recall the definition of a combined meet and join matrix by Korkee \cite{ISmo}:
\begin{definition}[\cite{ISmo}, p. 76]
Let $\mx M_{S,f}^{\alpha, \beta, \gamma, \delta}=[m_{ij}]\in\Cset^{n\times n}$ with
\[
m_{ij}=\frac{f(x_i\wedge x_j)^\alpha f(x_i\vee x_j)^\beta}{f(x_i)^\gamma f(x_j)^\delta},
\]
where $\alpha,\beta,\gamma,\delta$ are real numbers such that the matrix $\mx M_{S,f}^{\alpha, \beta, \gamma, \delta}$ exists.
\end{definition}

In order for the matrix $\mx M_{S,f}^{\alpha, \beta, \gamma, \delta}$ to exist whenever possible, we need to make the agreement that $0^0=1$, but even this does not entirely solve the problem.  The following remark provides detailed criteria for the existence of the matrix $\mx M_{S,f}^{\alpha, \beta, \gamma, \delta}$.

\begin{remark}\label{olemassaolo}
The matrix $\mx M_{S,f}^{\alpha, \beta, \gamma, \delta}$ exists if and only if the following conditions are satisfied:
\begin{enumerate}
\item If $f(x)=0$ for some $x\in S$, then $\gamma=\delta=0$,
\item If $f(x_i\wedge x_j)=0$ for some $x_i,x_j\in S$, then $\alpha\geq0$,
\item If $f(x_i\vee x_j)=0$ for some $x_i,x_j\in S$, then $\beta\geq0$.
\end{enumerate}
\end{remark}

By setting $\alpha=1$ and $\beta=\gamma=\delta=0$ we obtain $\mx M_{S,f}^{1, 0, 0, 0}=(S)_f$. On the other hand, if $\beta=1$ and $\alpha=\gamma=\delta=0$, then $\mx M_{S,f}^{0, 1, 0, 0}=[S]_f$. Thus the name \emph{combined meet and join matrix} is well justified.

Next we present the two factorization theorems for the matrix $\mx M_{S,f}^{\alpha, \beta, \gamma, \delta}$ given by Korkee \cite{ISmo}. The former makes use of the meet matrix $(S)_f$, whereas the latter uses the join matrix $[S]_f$. Here $\mx A\circ \mx B$ denotes the Hadamard product of the matrices $\mx A$ and $\mx B$ and $f^\alpha$ is simply the usual power of the function $f$ with $f^\alpha(x)=[f(x)]^\alpha$ for all $x\in P$.

\begin{proposition}[\cite{ISmo}, Theorem 3.1 (meet-oriented structure theorem)]\label{ISmotheorem1}
Let $\alpha, \beta, \gamma, \delta$ be real numbers such that the matrix $\mx M_{S,f}^{\alpha, \beta, \gamma, \delta}$ exists. Then
\[
\mx M_{S,f}^{\alpha, \beta, \gamma, \delta}=\mx F^{\beta-\gamma}((S)_{f^{\alpha-\beta}}\circ \mx G)\mx F^{\beta-\delta},
\]
where $\mx F=\mathrm{diag}(f(x_1),f(x_2),\ldots,f(x_n))$ and
\[
(\mx G)_{ij}=
\left\{ \begin{array}{ll}
1 & \textrm{if}\ x_i\preceq x_j\ \text{or}\ x_j\preceq x_i,\\
\frac{f^\beta(x_i\wedge x_j)f^\beta(x_i\vee x_j)}{f^\beta(x_i)f^\beta(x_j)} & \textrm{otherwise}.\\
\end{array}\right.
\]
\end{proposition}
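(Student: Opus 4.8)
The plan is a direct entrywise verification: since both sides are $n\times n$ matrices, it suffices to fix indices $i,j$ and check that the $(i,j)$ entries agree. First I would exploit the fact that $\mx F$ is diagonal, so that multiplying a matrix $\mx A$ on the left by $\mx F^{\beta-\gamma}$ and on the right by $\mx F^{\beta-\delta}$ simply scales its $(i,j)$ entry by $f(x_i)^{\beta-\gamma}f(x_j)^{\beta-\delta}$. Combining this with the definition of the Hadamard product and of the meet matrix $(S)_{f^{\alpha-\beta}}$, whose $(i,j)$ entry is $f(x_i\wedge x_j)^{\alpha-\beta}$, the $(i,j)$ entry of the right-hand side becomes
\[
f(x_i)^{\beta-\gamma}\,f(x_i\wedge x_j)^{\alpha-\beta}\,(\mx G)_{ij}\,f(x_j)^{\beta-\delta},
\]
and the whole proof reduces to showing that this equals $m_{ij}=f(x_i\wedge x_j)^{\alpha}f(x_i\vee x_j)^{\beta}f(x_i)^{-\gamma}f(x_j)^{-\delta}$.

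Next I would split into the two cases dictated by the definition of $\mx G$. If $x_i$ and $x_j$ are comparable, say $x_i\preceq x_j$, then $x_i\wedge x_j=x_i$ and $x_i\vee x_j=x_j$, while $(\mx G)_{ij}=1$; substituting and collecting the powers of $f(x_i)$ yields $f(x_i)^{\alpha-\gamma}f(x_j)^{\beta-\delta}$, which is exactly $m_{ij}$ in this case (the subcase $x_j\preceq x_i$, and in particular the diagonal $i=j$, is symmetric). If $x_i$ and $x_j$ are incomparable, I would substitute the second branch of $\mx G$: the factor $f(x_i\wedge x_j)^{\beta}$ from its numerator merges with $f(x_i\wedge x_j)^{\alpha-\beta}$ to give $f(x_i\wedge x_j)^{\alpha}$, the term $f(x_i\vee x_j)^{\beta}$ appears directly, and the factors $f(x_i)^{-\beta}$, $f(x_j)^{-\beta}$ from the denominator of $\mx G$ cancel the corresponding shifts in the diagonal factors, leaving $f(x_i)^{-\gamma}$ and $f(x_j)^{-\delta}$. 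This recovers $m_{ij}$ and completes the algebraic identity.

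The routine algebra is not where the difficulty lies; the main obstacle is to ensure that the right-hand side is a genuine, everywhere-defined matrix whenever $\mx M_{S,f}^{\alpha,\beta,\gamma,\delta}$ exists, i.e.\ to propagate the existence conditions of Remark \ref{olemassaolo} through the individual factors under the convention $0^0=1$. The benign case is a zero on $S$: if $f(x_i)=0$, then applying conditions (2) and (3) to the diagonal meet and join $x_i\wedge x_i=x_i\vee x_i=x_i$ forces $\alpha,\beta\geq0$, while condition (1) forces $\gamma=\delta=0$, so the exponents $\beta-\gamma=\beta$ and $\beta-\delta=\beta$ are nonnegative and $\mx F^{\beta-\gamma}$, $\mx F^{\beta-\delta}$ are well defined. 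The genuinely delicate situation is when $f(x_i\wedge x_j)=0$ or $f(x_i\vee x_j)=0$ for incomparable $x_i,x_j$: condition (2) only gives $\alpha\geq0$ and condition (3) only gives $\beta\geq0$, so an isolated factor such as $f(x_i\wedge x_j)^{\alpha-\beta}$ in the meet matrix, or a single term of $(\mx G)_{ij}$, may be a negative power of $0$ even though the grouped quantities $f(x_i\wedge x_j)^{\alpha}$ and $f(x_i\vee x_j)^{\beta}$ that actually enter $m_{ij}$ are perfectly well defined. The cleanest way I see to resolve this is to note that the $\beta$-powers carried by $\mx G$ are precisely engineered to recombine with the meet-matrix exponents before any zero is evaluated, so the factorization should be read with the powers grouped as in the computation above; under that reading the identity holds verbatim.
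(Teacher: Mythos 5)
This proposition is stated in the paper without any proof: it is quoted from Korkee \cite{ISmo}, Theorem 3.1, as a preliminary result, so there is no internal argument to compare yours against, and your proposal has to be judged on its own merits. On that basis it is correct. Reducing to the $(i,j)$ entry, absorbing the diagonal factors $f(x_i)^{\beta-\gamma}$ and $f(x_j)^{\beta-\delta}$, and then splitting into the comparable case (where $x_i\wedge x_j$ and $x_i\vee x_j$ collapse to $\{x_i,x_j\}$ and $(\mx G)_{ij}=1$) and the incomparable case (where the $\beta$-powers carried by $(\mx G)_{ij}$ recombine with $f(x_i\wedge x_j)^{\alpha-\beta}$ to give $f(x_i\wedge x_j)^{\alpha}f(x_i\vee x_j)^{\beta}f(x_i)^{-\gamma}f(x_j)^{-\delta}$) is exactly the computation required, and your algebra is right in both branches, including the diagonal. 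Your closing caveat is also well taken rather than a gap in your argument: when $f$ vanishes somewhere, Remark \ref{olemassaolo} only guarantees that the left-hand side exists, and an isolated factor such as $f(x_i\wedge x_j)^{\alpha-\beta}$ (when $\alpha\geq 0$ but $\alpha<\beta$) or the denominator of $(\mx G)_{ij}$ may fail to be defined; this is a defect, or implicit convention, of the statement as cited, and the grouped reading you propose is the only sensible one. It is worth noting how the present paper actually uses the proposition: in Theorem \ref{omatulos3} the function $f$ is assumed nonzero and semimultiplicative, so $\mx G=\mx J$ and every factor genuinely exists, while in Theorem \ref{omatulos1} condition \eqref{eq:ehto2} presupposes that the entries of $\mx G$ are defined but the same degenerate cases for $(S)_{f^{\alpha-\beta}}$ could in principle still arise and are glossed over there as well.
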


\begin{proposition}[\cite{ISmo}, Theorem 3.2 (join-oriented structure theorem)]\label{ISmotheorem2}
Let $\alpha, \beta, \gamma, \delta$ be such real numbers that the matrix $\mx M_{S,f}^{\alpha, \beta, \gamma, \delta}$ exists. Then
\[
\mx M_{S,f}^{\alpha, \beta, \gamma, \delta}=\mx F^{\alpha-\gamma}([S]_{f^{\beta-\alpha}}\circ \mx G)\mx F^{\alpha-\delta},
\]
where $\mx F=\mathrm{diag}(f(x_1),f(x_2),\ldots,f(x_n))$ and
\[
(\mx G)_{ij}=
\left\{ \begin{array}{ll}
1 & \textrm{if}\ x_i\preceq x_j\ \text{or}\ x_j\preceq x_i,\\
\frac{f^\alpha(x_i\wedge x_j)f^\alpha(x_i\vee x_j)}{f^\alpha(x_i)f^\alpha(x_j)} & \textrm{otherwise}.\\
\end{array}\right.
\]
\end{proposition}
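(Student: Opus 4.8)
The plan is to verify the claimed factorization entrywise, since both sides are $n\times n$ matrices over $\Cset$. Because $\mx F^{\alpha-\gamma}$ and $\mx F^{\alpha-\delta}$ are diagonal, multiplying them on the left and right of $[S]_{f^{\beta-\alpha}}\circ\mx G$ merely scales rows and columns, so the $(i,j)$ entry of the right-hand side is
\[
f(x_i)^{\alpha-\gamma}\,\bigl([S]_{f^{\beta-\alpha}}\circ\mx G\bigr)_{ij}\,f(x_j)^{\alpha-\delta}
= f(x_i)^{\alpha-\gamma}\,f(x_i\vee x_j)^{\beta-\alpha}\,(\mx G)_{ij}\,f(x_j)^{\alpha-\delta}.
\]
It then suffices to show that this equals $m_{ij}=f(x_i\wedge x_j)^\alpha f(x_i\vee x_j)^\beta f(x_i)^{-\gamma}f(x_j)^{-\delta}$ for every pair $(i,j)$.

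First I would split into two cases according to the definition of $\mx G$. If $x_i$ and $x_j$ are incomparable, then $(\mx G)_{ij}=f^\alpha(x_i\wedge x_j)f^\alpha(x_i\vee x_j)\bigl[f^\alpha(x_i)f^\alpha(x_j)\bigr]^{-1}$; substituting this and collecting exponents, the factors $f(x_i)^{\alpha}$ and $f(x_j)^{\alpha}$ cancel against the outer diagonal contributions (leaving exponents $-\gamma$ and $-\delta$), while the two powers of $f(x_i\vee x_j)$ combine to exponent $(\beta-\alpha)+\alpha=\beta$, and a factor $f(x_i\wedge x_j)^\alpha$ remains; this is exactly $m_{ij}$. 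If instead $x_i$ and $x_j$ are comparable, then $(\mx G)_{ij}=1$, and I would treat the subcases $x_i\preceq x_j$ and $x_j\preceq x_i$ separately. In the former $x_i\wedge x_j=x_i$ and $x_i\vee x_j=x_j$, so the right-hand entry becomes $f(x_i)^{\alpha-\gamma}f(x_j)^{(\beta-\alpha)+(\alpha-\delta)}=f(x_i)^{\alpha-\gamma}f(x_j)^{\beta-\delta}$, which matches $m_{ij}$; the latter subcase is symmetric with the roles of $x_i$ and $x_j$ exchanged.

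The hard part will not be the algebra of exponents---which is routine when $f$ is nowhere zero---but rather the justification of these manipulations when $f$ vanishes at some arguments. The cancellations above silently assume that powers such as $f(x_i)^{\alpha-\gamma}$ are well-defined and that $0^0=1$ behaves consistently. To handle this I would invoke the existence conditions of Remark \ref{olemassaolo}. For instance, if $f(x_i)=0$ for some $x_i\in S$, then condition (1) forces $\gamma=\delta=0$, and since $x_i\wedge x_i=x_i\vee x_i=x_i$ also gives $f(x_i\wedge x_i)=f(x_i\vee x_i)=0$, conditions (2) and (3) force $\alpha\geq 0$ and $\beta\geq 0$; under these constraints every power appearing in the case analysis is well-defined, and with the convention $0^0=1$ the displayed cancellations remain valid. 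Checking each case while respecting the sign and vanishing constraints of Remark \ref{olemassaolo} is the only genuinely delicate point.

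Alternatively, one could deduce this proposition from the meet-oriented Proposition \ref{ISmotheorem1} by order-duality: passing to the dual lattice $(P,\succeq)$ interchanges $\wedge$ and $\vee$ and swaps the roles of $\alpha$ and $\beta$, turning the meet matrix $(S)_{f^{\alpha-\beta}}$ into the join matrix $[S]_{f^{\beta-\alpha}}$ and the exponents $\beta-\gamma,\beta-\delta$ into $\alpha-\gamma,\alpha-\delta$. I would, however, favour the direct entrywise verification, since the existence conditions (2) and (3) are themselves interchanged under this duality and would have to be tracked carefully.
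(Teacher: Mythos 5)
The paper does not prove this proposition at all: it is quoted from Korkee \cite{ISmo}, so your direct entrywise verification is the natural (and essentially only) route, and there is no in-paper argument to compare against. Your generic-case algebra is correct: conjugating $[S]_{f^{\beta-\alpha}}\circ\mx G$ by the diagonal matrices multiplies the $(i,j)$ entry by $f(x_i)^{\alpha-\gamma}f(x_j)^{\alpha-\delta}$, and in each of your three cases (incomparable; $x_i\preceq x_j$; $x_j\preceq x_i$) the exponents collect to give $m_{ij}$, provided all powers involved are defined --- in particular whenever $f$ has no zeros, which is exactly the setting in which the paper later invokes this proposition (Theorem \ref{omatulos4} assumes $f:P\to\Rset\backslash\{0\}$).

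The gap is in the step you yourself flag as the delicate one: it is \emph{not} true that the conditions of Remark \ref{olemassaolo} make ``every power appearing in the case analysis well-defined.'' Existence of $\mx M_{S,f}^{\alpha,\beta,\gamma,\delta}$ does not imply existence of the right-hand side. Concretely, let $x_1,x_2$ be incomparable, $S=\{x_1,x_2\}$, $f(x_1)=f(x_2)=f(x_1\wedge x_2)=1$, $f(x_1\vee x_2)=0$, and $(\alpha,\beta,\gamma,\delta)=(2,1,0,0)$. Remark \ref{olemassaolo} demands only $\beta\geq 0$, so $\mx M_{S,f}^{2,1,0,0}$ exists (its off-diagonal entries equal $0$), yet $([S]_{f^{\beta-\alpha}})_{12}=f(x_1\vee x_2)^{-1}=0^{-1}$ is undefined, so the claimed factorization has no meaning here. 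Likewise, in the very situation you analyse ($f(x_i)=0$ for some $x_i\in S$, forcing $\gamma=\delta=0$ and $\alpha,\beta\geq0$): if $\alpha>0$ and $x_j$ is incomparable to $x_i$, then $(\mx G)_{ij}$ has denominator $f^\alpha(x_i)f^\alpha(x_j)=0$, so $\mx G$ itself is undefined, and the convention $0^0=1$ cannot repair a genuinely positive power of zero sitting in a denominator. What your argument actually proves is the proposition under the additional hypothesis that every entry of $[S]_{f^{\beta-\alpha}}$ and of $\mx G$ is defined (for instance, $f$ nowhere zero). To obtain the statement in the generality quoted, you would need either to import whatever conventions or extra hypotheses Korkee's original Theorem 3.2 carries, or to add such a hypothesis explicitly; your proof as written does not cover the degenerate cases it claims to settle.
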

After applying the previous two propositions, we also need to be able to factorize the usual meet and join matrices. The following four propositions help us with this. In order to shorten our notations, we introduce two so called \emph{restricted incidence functions} as well as a convolution operation for incidence functions. The function $f_d$ is defined on $\{\hat{0}\times P\}$, $f_u$ on $P\times \{\hat{1}\}$ and
\[
f_d(\hat{0},z)=f(z)=f_u(z,\hat{1})
\]
for all $z\in P$. The convolution  of incidence functions $f$ and $g$ is the incidence function $f\ast g$ for which
\[
(f\ast g)(x,y)=\sum_{x\preceq z\preceq y}f(x,z)g(z,y)
\]
for all $x,y\in P$. Another thing that we need is the Möbius function $\mu_P$ of the poset $P$. The function $\mu_P$ is usually defined as being the inverse of certain incidence function $\zeta$ with respect to the convolution (see \cite[p. 296]{McCarthy} and \cite[p. 141]{Ai}), but it may be more convenient to calculate its values recursively by using the formula
\[
\mu_P(x,y)=\left\{ \begin{array}{lll}
1 & \textrm{if}\ x=y,\\
-\sum\limits_{x\prec z\preceq y}\mu_P(z,y)=-\sum\limits_{x\preceq z\prec y}\mu_P(x,z) & \textrm{if $x\prec y$},\\
0 & \textrm{otherwise,}
\end{array}\right.
\]
see e.g. \cite[Proposition 4.6]{Ai}. This enables us to write briefly by using the convolution $\ast$ as
\[
\sum_{\hat{0}\preceq z\preceq w} f(z)\mu_P(z,w)=(f_d\ast\mu_P)(w)\text{ and }\sum_{w\preceq z\preceq \hat{1}} f(z)\mu_P(w,z)=(\mu_P \ast f_u)(w).
\]
Before going into the factorization theorems we need to deploy two concepts from lattice theory. First, let us assume that $\hat{0}$ is the smallest element of the lattice $(P,\preceq)$. The \emph{order ideal generated by the set $S$} is the set
\[
\{w\in P\ \big|\ \hat{0}\preceq w\preceq x_i\text{ for some }x_i\in S\}=\bigcup_{i=1}^n\lsem\hat{0},x_i\rsem
\]
and it is denoted by $\downarrow\hspace{-1mm} S$. Similarly, if we assume that $\hat{1}$ is the largest element of the lattice $(P,\preceq)$, we may define the \emph{order filter generated by the set $S$} as being the set
\[
\{w\in P\ \big|\ x_i\preceq w\preceq \hat{1}\text{ for some }x_i\in S\}=\bigcup_{i=1}^n\lsem x_i,\hat{1}\rsem,
\]
for which we use the notation $\uparrow\hspace{-1mm} S$.

\begin{proposition}[\cite{haukkanen2}, Lemma 3.2]\label{esityslause1}
Let $\downarrow\hspace{-1mm} S=\{w_1,w_2,\ldots,w_m\}$ and $\mx A=(a_{ij})$ be the $n\times m$ matrix with
\[
a_{ij}=\left\{ \begin{array}{ll}
\sqrt{(f_d*\mu_P)(\hat{0},w_j)} & \textrm{if}\ w_j\preceq x_i,\\
0 & \textrm{otherwise}\\
\end{array}\right.
\]
Then $(S)_f=\mx A\mx A^T$.
\end{proposition}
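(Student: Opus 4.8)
The plan is to verify the factorization entrywise: I would compute the $ij$ element of $\mx A\mx A^T$ and show that it equals $f(x_i\wedge x_j)$, which is exactly the $ij$ element of $(S)_f$. Writing
\[
(\mx A\mx A^T)_{ij}=\sum_{k=1}^m a_{ik}a_{jk},
\]
the definition of $\mx A$ shows that the term $a_{ik}a_{jk}$ is nonzero precisely when $w_k\preceq x_i$ \emph{and} $w_k\preceq x_j$, and in that case both factors equal $\sqrt{(f_d*\mu_P)(\hat 0,w_k)}$, so their product returns $(f_d*\mu_P)(\hat 0,w_k)$. (This holds irrespective of the sign of $(f_d*\mu_P)(\hat 0,w_k)$, and even for complex-valued $f$, since squaring the chosen square root recovers the original value.) Hence the sum effectively ranges over those $w_k\in\,\downarrow\! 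S$ lying below both $x_i$ and $x_j$.

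The key combinatorial step is to rewrite this index set. Since $(P,\preceq)$ is a lattice, $w_k\preceq x_i$ and $w_k\preceq x_j$ hold simultaneously if and only if $w_k\preceq x_i\wedge x_j$. Moreover $x_i\wedge x_j\preceq x_i$ gives $x_i\wedge x_j\in\,\downarrow\! S$, and because $\downarrow\! S$ is an order ideal, hence downward closed, every $w$ with $\hat 0\preceq w\preceq x_i\wedge x_j$ already belongs to $\downarrow\! S$. Consequently the surviving indices $k$ are in bijection with the entire principal ideal of $x_i\wedge x_j$, and
\[
(\mx A\mx A^T)_{ij}=\sum_{\hat 0\preceq w\preceq x_i\wedge x_j}(f_d*\mu_P)(\hat 0,w).
\]

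It then remains to collapse this sum. By construction $(f_d*\mu_P)(\hat 0,w)=\sum_{\hat 0\preceq z\preceq w}f(z)\mu_P(z,w)$ is the M\"obius transform of $f$ along the ideal, so by the M\"obius inversion theorem (equivalently, since $\mu_P$ is the convolution inverse of $\zeta$, so that $(f_d*\mu_P)*\zeta=f_d$ by associativity), summing $(f_d*\mu_P)(\hat 0,\cdot)$ over the principal ideal of any $y$ returns $f(y)$. Applying this with $y=x_i\wedge x_j$ yields $(\mx A\mx A^T)_{ij}=f(x_i\wedge x_j)=((S)_f)_{ij}$ for all $i,j$, which proves the claim. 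I expect the only genuinely delicate point to be the index-set reduction of the second paragraph: one must be certain that the condition ``$w_k\preceq x_i$ and $w_k\preceq x_j$'' exhausts the full principal ideal $\{w:\hat 0\preceq w\preceq x_i\wedge x_j\}$ inside $\downarrow\! S$, as this downward-closedness is exactly what licenses the M\"obius inversion to be applied cleanly. Once that is secured, the rest is a routine convolution computation.
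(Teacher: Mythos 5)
Your proof is correct. The paper does not actually prove this proposition itself (it is imported from \cite{haukkanen2}, Lemma 3.2), but your argument — expand $(\mx A\mx A^T)_{ij}$ entrywise, use the lattice property and the downward closedness of $\downarrow\hspace{-1mm} S$ to identify the surviving indices with the full interval $\boldsymbol{\lsem}\hat{0},x_i\wedge x_j\boldsymbol{\rsem}$, then collapse the sum via M\"obius inversion $(f_d*\mu_P)*\zeta=f_d$ — is exactly the standard proof of this factorization, and the point you flag as delicate (that downward closedness is what makes the inversion applicable) is indeed the crux.
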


\begin{proposition}[\cite{haukkanen4}, Lemma 4.2]\label{aat-lause2}
Let $\uparrow\hspace{-1mm} S=\{w_1,w_2,\ldots,w_m\}$ $\mx A=(a_{ij})$ be the $n\times m$ matrix with
\[
a_{ij}=\left\{ \begin{array}{ll}
\sqrt{(\mu_P*f_u)(w_j,\hat{1})} & \textrm{if}\ x_i\preceq w_j,\\
0 & \textrm{otherwise.}\\
\end{array}\right.
\]
Then $[S]_f=\mx A\mx A^T$.
\end{proposition}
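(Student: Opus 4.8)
The plan is to mirror the proof of Proposition \ref{esityslause1}, replacing its downward structure (meets, the order ideal $\downarrow\! S$, the transform $f_d\ast\mu_P$) by the order-dual one (joins, the order filter $\uparrow\! S$, the transform $\mu_P\ast f_u$). First I would compute the $(i,j)$ entry of $\mx A\mx A^T$ straight from the definition of $\mx A$. In the $k$th column the entries $a_{ik}$ and $a_{jk}$ are both nonzero exactly when $x_i\preceq w_k$ and $x_j\preceq w_k$, and then each equals $\sqrt{(\mu_P\ast f_u)(w_k,\hat{1})}$, so their product collapses to $(\mu_P\ast f_u)(w_k,\hat{1})$. (This multiplying-out of the two square roots is also why the identity holds verbatim for complex-valued $f$: it is the transpose $\mx A^T$, not the conjugate transpose, that appears.) Hence
\[
(\mx A\mx A^T)_{ij}=\sum_{\substack{w_k\in\,\uparrow\! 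S\\ x_i\preceq w_k,\ x_j\preceq w_k}}(\mu_P\ast f_u)(w_k,\hat{1}).
\]

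Next I would simplify the range of summation. In a lattice the conditions $x_i\preceq w_k$ and $x_j\preceq w_k$ hold simultaneously if and only if $x_i\vee x_j\preceq w_k$, so the index set becomes $\{w_k\in\,\uparrow\! S : x_i\vee x_j\preceq w_k\}$. The one piece of bookkeeping to verify is that this set is exactly the principal filter $\{z\in P : x_i\vee x_j\preceq z\}$: every such $z$ satisfies $x_i\preceq x_i\vee x_j\preceq z$ and so lies in $\uparrow\! S$, while conversely each listed $w_k$ lies in that filter. Thus the sum may be taken over all $z\succeq x_i\vee x_j$ in $P$ with nothing lost or counted twice.

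Finally I would apply M\"obius inversion in its upward (dual) form. Writing $h(w)=(\mu_P\ast f_u)(w,\hat{1})=\sum_{w\preceq z\preceq\hat{1}}\mu_P(w,z)f(z)$, the fact that $\mu_P$ is the convolution inverse of the incidence function $\zeta$ gives $f(w)=\sum_{w\preceq z\preceq\hat{1}}h(z)$, the exact order dual of the identity $\sum_{\hat{0}\preceq w\preceq v}(f_d\ast\mu_P)(\hat{0},w)=f(v)$ underlying Proposition \ref{esityslause1}; this is also where the standing assumption that $P$ has a largest element $\hat{1}$ (making each interval $[w,\hat{1}]$ finite) is used. Substituting $w=x_i\vee x_j$ gives $\sum_{z\succeq x_i\vee x_j}h(z)=f(x_i\vee x_j)$, which is precisely $(\mx A\mx A^T)_{ij}=f(x_i\vee x_j)=([S]_f)_{ij}$, and the proof is complete. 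The only genuine subtlety---and the step I would treat most carefully---is getting the direction of this dual inversion right and confirming that the two summation ranges coincide; the remainder is a routine transcription of the meet-matrix argument.
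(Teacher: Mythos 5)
Your proof is correct and is essentially the standard argument behind this result: the paper itself does not prove Proposition \ref{aat-lause2} (it quotes it from Korkee and Haukkanen's Lemma 4.2), and the proof there, like yours, expands $(\mx A\mx A^T)_{ij}$, identifies the index set with the principal filter above $x_i\vee x_j$ inside $\uparrow\hspace{-1mm} S$, and applies M\"obius inversion in its dual (upward) form to recover $f(x_i\vee x_j)$. Nothing further is needed; your attention to the branch-independence of the square roots (hence transpose rather than conjugate transpose) and to the finiteness of the intervals $\lsem w,\hat{1}\rsem$ covers the only delicate points.
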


\begin{proposition}[\cite{bhat}, Theorem 12]\label{edet-lause1}
Let $S$ be a meet closed set and let $\mx E$ and $\mx D=\mathrm{diag}(d_1,d_2,\ldots,d_n)$ be the $n\times n$ matrices with
\[
e_{ij}=\left\{ \begin{array}{ll}
1 & \textrm{if}\ x_j\preceq x_i,\\
0 & \textrm{otherwise}\\
\end{array}\right.
\]
and
\[
d_i=\sum_{\substack{z\preceq x_i \\ z\npreceq x_j\ \mathrm{for}\ j<i}}(f_d*\mu_P)(\hat{0},z).
\]
Then $(S)_f=\mx E\mx D\mx E^T$.
\end{proposition}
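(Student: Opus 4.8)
The plan is to verify the factorization entrywise, establishing that $(\mx E\mx D\mx E^T)_{ij}=f(x_i\wedge x_j)$ for all $i,j$. First I would expand the matrix product directly: since $e_{ik}=1$ precisely when $x_k\preceq x_i$, the $(i,j)$ entry of $\mx E\mx D\mx E^T$ equals $\sum_{k}e_{ik}d_ke_{jk}=\sum_{k:\,x_k\preceq x_i,\ x_k\preceq x_j}d_k$. Because $x_i\wedge x_j$ is the greatest lower bound of $x_i$ and $x_j$, the joint condition $x_k\preceq x_i$ and $x_k\preceq x_j$ is equivalent to $x_k\preceq x_i\wedge x_j$, so this entry is $\sum_{k:\,x_k\preceq x_i\wedge x_j}d_k$. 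At this point the meet closedness of $S$ enters for the first time: it guarantees $x_i\wedge x_j\in S$, say $x_i\wedge x_j=x_p$.

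Next I would rewrite $f$ through M\"obius inversion. Setting $g(z)=(f_d*\mu_P)(\hat{0},z)$, the fact that $\mu_P$ is the convolution inverse of the zeta function yields the standard inversion $f(z)=\sum_{w\preceq z}g(w)$ for every $z\in P$; in particular $f(x_p)=\sum_{w\preceq x_p}g(w)$. Substituting the definition of $d_k$ converts the entry into the double sum $\sum_{k:\,x_k\preceq x_p}\sum_{\substack{z\preceq x_k\\ z\npreceq x_l\ \text{for}\ l<k}}g(z)$. The whole proof then reduces to a counting statement: as $(k,z)$ ranges over the index set of this double sum, each $z\preceq x_p$ should be counted exactly once and no $z\npreceq x_p$ at all.

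The main obstacle is exactly this counting step. For a fixed $z\preceq x_p$, let $k_0$ be the smallest index with $z\preceq x_{k_0}$; then $z$ satisfies $z\preceq x_{k_0}$ and $z\npreceq x_l$ for all $l<k_0$, and $k_0$ is the only index with this property, so $z$ is counted in at most one inner sum. The delicate point is to check that $k_0$ really belongs to the outer index set, i.e. that $x_{k_0}\preceq x_p$. Here meet closedness is used a second time: from $z\preceq x_{k_0}$ and $z\preceq x_p$ we get $z\preceq x_{k_0}\wedge x_p\in S$, say $x_{k_0}\wedge x_p=x_q$; then $x_q\preceq x_{k_0}$ forces $q\leq k_0$ by \eqref{eq:condition2}, while $z\preceq x_q$ together with the minimality of $k_0$ forces $q\geq k_0$, whence $q=k_0$ and $x_{k_0}=x_{k_0}\wedge x_p\preceq x_p$. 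Conversely, any $z$ occurring in the double sum satisfies $z\preceq x_k\preceq x_p$ for the relevant $k$, so no term with $z\npreceq x_p$ is ever counted. Collecting these facts gives $\sum_{k:\,x_k\preceq x_p}d_k=\sum_{w\preceq x_p}g(w)=f(x_p)=f(x_i\wedge x_j)$, which is the desired entrywise identity.
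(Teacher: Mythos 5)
Your proof is correct. Note that the paper itself gives no proof of this proposition --- it is imported from Bhat's paper as a known result --- so there is nothing internal to compare against; your entrywise verification (expand $(\mx E\mx D\mx E^T)_{ij}$, reduce to $\sum_{x_k\preceq x_p}d_k$ with $x_p=x_i\wedge x_j$, and invoke M\"obius inversion) is the standard argument. You also correctly isolate and settle the one genuinely delicate point: that for $z\preceq x_p$ the minimal index $k_0$ with $z\preceq x_{k_0}$ satisfies $x_{k_0}\preceq x_p$, which is precisely where meet closedness is needed a second time (beyond guaranteeing $x_p\in S$) together with the ordering condition \eqref{eq:condition2}, so that the sets indexed by $\{k : x_k\preceq x_p\}$ really partition the order ideal below $x_p$.
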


\begin{proposition}[\cite{IHM}, Proposition 2.5]\label{edet-lause2}
Let $S$ be a join closed set and let $\mx E$ and $\mx D=\mathrm{diag}(d_1,d_2,\ldots,d_n)$ be the $n\times n$ matrices with
\[
e_{ij}=\left\{ \begin{array}{ll}
1 & \textrm{if}\ x_j\preceq x_i,\\
0 & \textrm{otherwise}\\
\end{array}\right.
\]
and
\[
d_i=\sum_{\substack{x_i\preceq z \\ x_j\npreceq z\ \mathrm{for}\ i<j}}(\mu_P*f_u)(z,\hat{1}).
\]
Then $[S]_f=\mx E^T\mx D\mx E$.
\end{proposition}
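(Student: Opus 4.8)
The plan is to prove the identity entrywise, exploiting that this is precisely the order-dual of Bhat's meet-closed factorization in Proposition \ref{edet-lause1}. First I would compute the $(i,j)$ entry of $\mx E^T\mx D\mx E$. Since $e_{ki}=1$ exactly when $x_i\preceq x_k$, one gets
\[
(\mx E^T\mx D\mx E)_{ij}=\sum_{k}e_{ki}\,d_k\,e_{kj}=\sum_{\substack{k\\ x_i\vee x_j\preceq x_k}}d_k .
\]
Because $S$ is join closed, $w:=x_i\vee x_j\in S$, say $w=x_t$; moreover $P$ has $\hat 1$ and is locally finite, so the filter $\{z\in P\mid w\preceq z\}=\lsem w,\hat 1\rsem$ is finite and all sums below are well defined. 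It therefore suffices to show $\sum_{k\,:\,w\preceq x_k}d_k=f(w)$, which gives $(\mx E^T\mx D\mx E)_{ij}=f(x_i\vee x_j)$, i.e. the $(i,j)$ entry of $[S]_f$.

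Next I would record the dual Möbius-inversion identity. Writing $g(z)=(\mu_P*f_u)(z,\hat 1)=\sum_{z\preceq y\preceq\hat 1}\mu_P(z,y)f(y)$ and summing over the finite filter of $w$, I would interchange the order of summation to obtain
\[
\sum_{w\preceq z}g(z)=\sum_{w\preceq y}f(y)\sum_{w\preceq z\preceq y}\mu_P(z,y)=\sum_{w\preceq y}f(y)\,[\,w=y\,]=f(w),
\]
where the inner sum collapses to $[\,w=y\,]$ directly from the recursive defining property of $\mu_P$ recalled in Section 2. Thus $f(w)$ is recovered as the total mass of $g$ over the whole filter of $w$.

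It remains to show that the index sets appearing in the $d_k$ distribute this mass correctly: setting $A_k=\{z\in P\mid x_k\preceq z,\ x_l\npreceq z\text{ for all }l>k\}$, I claim the family $(A_k)_{\,w\preceq x_k}$ is a partition of the filter $\{z\mid w\preceq z\}$. Disjointness is immediate, since if $k<k'$ then any $z\in A_k$ satisfies $x_{k'}\npreceq z$, ruling out $z\in A_{k'}$. For the covering, each $A_k$ with $w\preceq x_k$ lies in the filter because $w\preceq x_k\preceq z$; conversely, given $z$ with $w\preceq z$, let $k$ be the largest index with $x_k\preceq z$ (such $k$ exists as $t$ qualifies). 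Then $z\in A_k$ automatically, and the main point is to verify $w\preceq x_k$. Here join-closedness enters: since $w\preceq z$ and $x_k\preceq z$ we have $w\vee x_k\preceq z$ with $w\vee x_k\in S$, so its index is $\leq k$ by maximality of $k$, while $x_k\preceq w\vee x_k$ forces that index to be $\geq k$ by condition \eqref{eq:condition2}; hence $w\vee x_k=x_k$, i.e. $w\preceq x_k$. Summing $g$ over the partition then gives $\sum_{k\,:\,w\preceq x_k}d_k=\sum_{w\preceq z}g(z)=f(w)$, which finishes the proof. The only genuinely delicate step is this coverage argument, where join-closedness together with the compatibility condition \eqref{eq:condition2} is exactly what guarantees that the maximal dominating index still dominates $w$; everything else is the formal dual of the meet-closed case.
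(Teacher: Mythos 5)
Your proof is correct. There is, however, nothing in the paper to compare it against: the paper states this proposition only with a citation to \cite{IHM} and supplies no proof of its own, so your argument stands as a self-contained justification. It is the natural one, namely the exact order-dual of Bhat's factorization underlying Proposition \ref{edet-lause1}: reduce the claim entrywise to the identity $\sum_{k\,:\,w\preceq x_k}d_k=f(w)$ for $w=x_i\vee x_j$ (which lies in $S$ by join-closedness), establish the M\"obius-inversion identity $\sum_{w\preceq z}(\mu_P*f_u)(z,\hat{1})=f(w)$ by interchanging two sums over the finite interval $\lsem w,\hat{1}\rsem$ and collapsing $\sum_{w\preceq z\preceq y}\mu_P(z,y)$ via the recursive definition of $\mu_P$, and finally show that the index sets defining those $d_k$ with $w\preceq x_k$ partition that interval. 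The one genuinely delicate point is the covering step of the partition, and you resolve it correctly: for $z\succeq w$, taking $k$ maximal with $x_k\preceq z$, the element $w\vee x_k$ belongs to $S$, satisfies $w\vee x_k\preceq z$, and dominates $x_k$, so maximality of $k$ together with condition \eqref{eq:condition2} forces $w\vee x_k=x_k$, i.e.\ $w\preceq x_k$. This is precisely where join-closedness and the index-compatibility of the ordering are both needed; the remaining steps (disjointness, the entrywise expansion of $\mx E^T\mx D\mx E$, and finiteness of all sums from local finiteness plus the existence of $\hat{1}$) are formal and handled accurately.
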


Before we can use these factorizations to estimate the eigenvalues of the matrix $\mx M_{S,f}^{\alpha, \beta, \gamma, \delta}$, we also need the following lemma.

\begin{lemma}\label{Hadamardin tulo}
Let $\mx A=[a_{ij}],\mx B=[b_{ij}],\mx C=[c_{ij}],\mx D=[d_{ij}]\in\Cset^{n\times n}$, where $\mx C$ and $\mx D$ are diagonal matrices. Then
\[
\mx C(\mx A\circ\mx B)\mx D=\mx B\circ(\mx C\mx A\mx D).
\]
\end{lemma}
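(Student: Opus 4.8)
The plan is to verify the identity entrywise, since both sides are $n\times n$ matrices and it therefore suffices to show that their $(i,j)$ entries agree for all $i,j$. First I would write $\mx C=\mathrm{diag}(c_1,\ldots,c_n)$ and $\mx D=\mathrm{diag}(d_1,\ldots,d_n)$, and recall the two elementary facts about diagonal matrices: left multiplication by $\mx C$ scales the $i$th row by $c_i$, while right multiplication by $\mx D$ scales the $j$th column by $d_j$. Combining these, for any matrix $\mx X=[x_{ij}]\in\Cset^{n\times n}$ the $(i,j)$ entry of $\mx C\mx X\mx D$ is simply $c_i x_{ij} d_j$.

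Applying this observation with $\mx X=\mx A\circ\mx B$ handles the left-hand side: since the $(i,j)$ entry of $\mx A\circ\mx B$ is $a_{ij}b_{ij}$, the $(i,j)$ entry of $\mx C(\mx A\circ\mx B)\mx D$ equals $c_i a_{ij}b_{ij}d_j$. For the right-hand side I would first compute that the $(i,j)$ entry of $\mx C\mx A\mx D$ is $c_i a_{ij}d_j$, and then take the Hadamard product with $\mx B$, which multiplies this entry by $b_{ij}$ to give $b_{ij}\,c_i a_{ij}d_j$. Because multiplication of scalars in $\Cset$ is commutative, the two expressions $c_i a_{ij}b_{ij}d_j$ and $b_{ij}c_i a_{ij}d_j$ are identical, and the asserted matrix identity follows at once.

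There is no genuine obstacle in this argument; it is a routine index computation. The only point that requires a little care is bookkeeping: one must attach each diagonal scalar to the correct index ($c_i$ to the row index coming from the left factor $\mx C$, and $d_j$ to the column index coming from the right factor $\mx D$), and then observe that the Hadamard factor $\mx B$ commutes past these diagonal scalings precisely because both the diagonal multiplications and the Hadamard product act on each entry independently. This per-entry independence is exactly what makes the order in which $\mx B$ and the diagonal factors are applied irrelevant.
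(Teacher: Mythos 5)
Your proof is correct and follows essentially the same route as the paper's: both verify the identity entrywise, reducing each side to $c_{ii}a_{ij}b_{ij}d_{jj}$ and invoking commutativity of scalar multiplication. Your use of the row/column-scaling description of diagonal multiplication just streamlines the summation bookkeeping that the paper carries out explicitly.
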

\begin{proof}
Since
\begin{align*}
&(\mx C(\mx A\circ\mx B)\mx D)_{ij}=\sum_{k=1}^nc_{ik}((\mx A\circ\mx B)\mx D)_{kj}=\sum_{k=1}^nc_{ik}\left(\sum_{l=1}^n(\mx A\circ\mx B)_{kl}d_{lj}\right)\\
&=\sum_{k=1}^nc_{ik}\left(\sum_{l=1}^na_{kl}b_{kl}d_{lj}\right)=\sum_{k=1}^n\sum_{l=1}^nc_{ik}a_{kl}b_{kl}\cdot\underbrace{d_{lj}}_{\substack{=0\\ \text{when}\ l\neq j}}\\
&=\sum_{k=1}^n\underbrace{c_{ik}}_{\substack{=0\\ \text{when}\ i\neq k}}\cdot a_{kj}b_{kj}d_{jj}=c_{ii}a_{ij}b_{ij}d_{jj}\\
&=b_{ij}((c_{ii}a_{ij})d_{jj})=b_{ij}\left(\left(\sum_{k=1}^nc_{ik}a_{kj}\right)d_{jj}\right)
=b_{ij}\left((\mx C\mx A)_{ij}d_{jj}\right)\\&=b_{ij}\left(\sum_{k=1}^n(\mx C\mx A)_{ik}d_{kj}\right)
=b_{ij}((\mx C\mx A)\mx D)_{ij}=(\mx B\circ(\mx C\mx A\mx D))_{ij},
\end{align*}
the claim follows.
\end{proof}

In the following two sections we need to assume that our function $f$ is \emph{semimultiplicative,} which means that
\begin{equation*}\label{eq:semi}
f(x)f(y)=f(x\land y)f(x\lor y)
\end{equation*}
for all $x,y\in P$. We also adopt one constant $c_n$ from Hong and Loewy \cite{hong} and another $C_n$ from Ilmonen et al. \cite{IHM}. Let $K(n)$ denote the set of all $n\times n$ lower triangular $0,1$ matrices with each main diagonal element equal to $1$. Now for every positive integer $n$ we define
\[
c_n=\min\{\lambda\,\big|\,\mx X \in K(n)\text{ and $\lambda$ is the smallest eigenvalue of }\mx X\mx X^T\}
\]
and
\[
C_n=\max\{\lambda\,\big|\,\mx X \in K(n)\text{ and $\lambda$ is the largest eigenvalue of }\mx X\mx X^T\}.
\]

Finally, we introduce some old and new notations concerning matrix analysis. We denote that $\mx J$ is the $n\times n$ matrix with all its elements equal to $1$ (i.e. $\mx J$ is the identity element under the Hadamard product of complex $n\times n$ matrices). If $\mx A$ and $\mx B$ are real matrices, the notation $\mx A\leqslant \mx B$ is used for the componentwise inequality (that is, $a_{ij}\leq b_{ij}$ for all $i,j=1,\ldots,n$). In this paper, $\abs{\mx A}$ does not stand for the determinant of $\mx A$, but for the $n\times n$ matrix, with $\abs{a_{ij}}$ as its $ij$ element. The Frobenius and spectral norms of a given matrix $\mx A$ are denoted by $\matrixnorm{\mx A}_F$ and $\matrixnorm{\mx A}_S$ respectively. 
As usual, the spectral radius $\rho(\mx A)$ of a matrix $\mx A$ is defined to be the maximum of the absolute values of the eigenvalues of $\mx A$. For the purposes of this paper, it is convenient to deploy similar notation for the smallest absolute value of the eigenvalues of the matrix $\mx A$. We denote
\[
\kappa(\mx A)=\min\{\abs{\lambda}\,\big|\,\lambda\text{ is an eigenvalue of }\mx A\}.
\]
For example, if $\mx A$ is invertible and Hermitean, then
\[
\rho(\mx A^{-1})=\matrixnorm{\mx A^{-1}}_S=\frac{1}{\kappa(\mx A)}.
\]
\section{Lower bound for the smallest eigenvalue of a positive definite combined meet and join matrix}

Under suitable circumstances the matrix $\mx M_{S,f}^{\alpha, \beta, \gamma, \delta}$ becomes positive definite and it is thus possible to find a real lower bound for its smallest eigenvalue by making use of the structure theorems presented earlier. 

\begin{theorem}\label{omatulos3}
Let $\alpha,\beta,\gamma,\delta$ be real numbers such that $\gamma=\delta$ and the matrix $\mx M_{S,f}^{\alpha, \beta, \gamma, \gamma}$ exists. Let $f:P\to\Rset\backslash\{0\}$ be a semimultiplicative function and $\downarrow\hspace{-1mm} S=\{w_1,w_2,\ldots,w_m\}$. If $(f_d^{\alpha-\beta}*\mu_P)(\hat{0},w_i)>0$ for all $w_i\in\downarrow\hspace{-1mm} S$, then
\[
\kappa(\mx M_{S,f}^{\alpha, \beta, \gamma, \gamma})\geq c_n\cdot \min_{1\leq i\leq n}(f_d^{\alpha-\beta}*\mu_P)(\hat{0},x_i)\cdot\min_{1\leq i\leq n}[f^2(x_i)]^{\beta-\gamma}.
\]
\end{theorem}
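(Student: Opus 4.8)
The plan is to turn the combined matrix into a diagonally scaled meet matrix and then bound the smallest eigenvalue of that meet matrix via its $\mx A\mx A^T$-factorisation, the constant $c_n$ entering through an incidence block that lies in $K(n)$. First I would use semimultiplicativity to simplify the meet-oriented structure theorem. Since $f(x_i)f(x_j)=f(x_i\wedge x_j)f(x_i\vee x_j)$, every off-diagonal entry of the matrix $\mx G$ in Proposition \ref{ISmotheorem1} reduces to $1$, so $\mx G=\mx J$; as $\gamma=\delta$ and $\mx A\circ\mx J=\mx A$ for any $\mx A$, Proposition \ref{ISmotheorem1} collapses to
\[
\mx M_{S,f}^{\alpha,\beta,\gamma,\gamma}=\mx F^{\beta-\gamma}(S)_{f^{\alpha-\beta}}\mx F^{\beta-\gamma}.
\]
Because $f$ is real-valued and $\mx F^{\beta-\gamma}$ is a real diagonal matrix, this is a real symmetric matrix, and once we know $(S)_{f^{\alpha-\beta}}$ is positive semidefinite the congruence makes $\mx M_{S,f}^{\alpha,\beta,\gamma,\gamma}$ positive semidefinite as well, so that $\kappa(\mx M_{S,f}^{\alpha,\beta,\gamma,\gamma})$ is simply its smallest eigenvalue.

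Next I would peel off the diagonal scaling by a Rayleigh-quotient estimate. Setting $u=\mx F^{\beta-\gamma}v$, for every $v$ one has $v^T\mx M_{S,f}^{\alpha,\beta,\gamma,\gamma}v=u^T(S)_{f^{\alpha-\beta}}u\geq\lambda_{\min}((S)_{f^{\alpha-\beta}})\,u^Tu$, while $u^Tu=\sum_i[f^2(x_i)]^{\beta-\gamma}v_i^2\geq\bigl(\min_i[f^2(x_i)]^{\beta-\gamma}\bigr)v^Tv$. Since $\lambda_{\min}((S)_{f^{\alpha-\beta}})$ will be shown to be nonnegative, these combine to
\[
\kappa(\mx M_{S,f}^{\alpha,\beta,\gamma,\gamma})\geq\lambda_{\min}\bigl((S)_{f^{\alpha-\beta}}\bigr)\cdot\min_{1\leq i\leq n}[f^2(x_i)]^{\beta-\gamma},
\]
and it remains to prove $\lambda_{\min}((S)_{f^{\alpha-\beta}})\geq c_n\cdot\min_i(f_d^{\alpha-\beta}*\mu_P)(\hat{0},x_i)$.

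This final estimate is the heart of the argument, and I would obtain it from the factorisation $(S)_{f^{\alpha-\beta}}=\mx A\mx A^T$ of Proposition \ref{esityslause1}, where the hypothesis $(f_d^{\alpha-\beta}*\mu_P)(\hat{0},w_i)>0$ ensures that $\mx A$ is real. Factoring $\mx A=\mx Z\mx\Lambda^{1/2}$, with $\mx Z$ the $n\times m$ matrix whose $ij$-entry is $1$ if $w_j\preceq x_i$ and $0$ otherwise and $\mx\Lambda=\mathrm{diag}(\Delta_1,\ldots,\Delta_m)$ where $\Delta_j=(f_d^{\alpha-\beta}*\mu_P)(\hat{0},w_j)$, I would compute for every $v$
\[
v^T(S)_{f^{\alpha-\beta}}v=v^T\mx Z\mx\Lambda\mx Z^T v=\sum_{j=1}^m\Delta_j(\mx Z^T v)_j^2\geq\sum_{w_j\in S}\Delta_j(\mx Z^T v)_j^2,
\]
the last step discarding the nonnegative terms coming from $\downarrow\hspace{-1mm}S\setminus S$. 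Bounding the retained coefficients below by $\min_{1\leq i\leq n}\Delta_{x_i}$ leaves $\min_i\Delta_{x_i}\cdot v^T\mx Z_0\mx Z_0^T v$, where $\mx Z_0$ is the $n\times n$ block of $\mx Z$ formed by the columns indexed by $x_1,\ldots,x_n$. By condition \eqref{eq:condition2} the entry of $\mx Z_0$ in position $(i,l)$ vanishes whenever $l>i$ and equals $1$ on the diagonal, so $\mx Z_0\in K(n)$ and hence $v^T\mx Z_0\mx Z_0^T v\geq c_n\,v^Tv$ by the definition of $c_n$; this simultaneously shows $\lambda_{\min}((S)_{f^{\alpha-\beta}})\geq0$, and chaining the inequalities delivers the claimed bound.

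The step I expect to be the main obstacle is precisely this passage to $K(n)$: one must discard the columns coming from $\downarrow\hspace{-1mm}S\setminus S$ \emph{before} extracting the common factor, since otherwise one is forced to use $\min_{w_j\in\downarrow S}\Delta_{w_j}$ rather than the sharper $\min_{1\leq i\leq n}\Delta_{x_i}$ appearing in the statement, and one must appeal to \eqref{eq:condition2} to be sure that the surviving $n\times n$ incidence block is genuinely lower triangular with unit diagonal so that the constant $c_n$ really applies.
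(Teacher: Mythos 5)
Your proof is correct, and it shares the paper's skeleton: reduce to $\mx F^{\beta-\gamma}(S)_{f^{\alpha-\beta}}\mx F^{\beta-\gamma}$ via semimultiplicativity and Proposition \ref{ISmotheorem1}, factor $(S)_{f^{\alpha-\beta}}$ through $\downarrow\hspace{-1mm} S$ by Proposition \ref{esityslause1}, split off the columns coming from $\downarrow\hspace{-1mm} S\setminus S$, and feed the surviving $n\times n$ incidence block (the paper's $\mx E$, your $\mx Z_0$) into the definition of $c_n$; you even isolate the same key subtlety, namely that the extra columns must be discarded before extracting the minimum, so that it runs over $S$ rather than over all of $\downarrow\hspace{-1mm} S$. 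Where you genuinely diverge is in the machinery used to extract the bound. The paper discards the positive semidefinite block $(\mx F^{\beta-\gamma}\mx C)(\mx F^{\beta-\gamma}\mx C)^T$ by a Weyl-type eigenvalue inequality (Horn--Johnson, Corollary 4.3.12), proves that $\mx F^{\beta-\gamma}\mx B$ is invertible via its determinant, and then passes to the inverse, bounding $\matrixnorm{[(\mx F^{\beta-\gamma}\mx B)(\mx F^{\beta-\gamma}\mx B)^T]^{-1}}_S$ by submultiplicativity of the spectral norm and converting back through $\kappa(\mx A)=1/\matrixnorm{\mx A^{-1}}_S$. You instead argue with Rayleigh quotients throughout: you peel off the diagonal factor first by substituting $u=\mx F^{\beta-\gamma}v$, then bound the quadratic form of $(S)_{f^{\alpha-\beta}}$ from below by dropping the nonnegative terms $\Delta_j(\mx Z^Tv)_j^2$ with $w_j\notin S$ and pulling out $\min_{1\leq i\leq n}\Delta_{x_i}$, finally invoking $\mx Z_0\in K(n)$. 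Your route is more elementary and self-contained: it needs no invertibility check, no inverse matrices, no norm calculus, and no citations beyond the definition of $c_n$, and it makes the positive definiteness of $\mx M_{S,f}^{\alpha,\beta,\gamma,\gamma}$ (hence $\kappa=\lambda_{\min}$) transparent along the way. The paper's route is more modular, leaning on standard named results, but both arguments use the same hypotheses (nonzero real $f$, positivity of the convolution values) and deliver exactly the same bound.
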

\begin{proof}
Let $\mx A=(a_{ij})$ be the $n\times m$ matrix with
\[
a_{ij}=\left\{ \begin{array}{ll}
\sqrt{(f_d^{\alpha-\beta}*\mu_P)(\hat{0},w_j)} & \textrm{if}\ w_j\preceq x_i,\\
0 & \textrm{otherwise.}\\
\end{array}\right.
\]
and $\mx F=\text{diag}(f(x_1),\ldots,f(x_n)).$ With Proposition \ref{esityslause1} we have $(S)_{f^{\alpha-\beta}}=\mx A\mx A^T$. We may assume that $w_i=x_i$ for all $i\in\{1,2,\ldots,n\}$, since rearranging the order of the elements of the set $\downarrow\hspace{-1mm} S$ corresponds to permuting some of the rows and respective columns of $(S)_f$, which does not affect to the eigenvalues.

The matrix $\mx A$ can now be divided into blocks
\[
\mx A=[\mx B\ |\ \mx C],
\]
where $\mx B$ is an $n\times n$ matrix and $\mx C$ is of size $n\times (m-n)$. Since $f$ is a semimultiplicative function, every element of the matrix $\mx G$ defined in Proposition \ref{ISmotheorem1} is equal to $1$. By applying this proposition we obtain
\begin{align}\label{eq:hajotus}
\mx M_{S,f}^{\alpha, \beta, \gamma, \gamma}&=\mx F^{\beta-\gamma}((S)_{f^{\alpha-\beta}}\circ\mx G)\mx F^{\beta-\gamma}
=\mx F^{\beta-\gamma}((S)_{f^{\alpha-\beta}}\circ\mx J)\mx F^{\beta-\gamma}\notag\\&=\mx F^{\beta-\gamma}(S)_{f^{\alpha-\beta}}\mx F^{\beta-\gamma}=\mx F^{\beta-\gamma}(\mx A\mx A^T)\mx F^{\beta-\gamma}\notag\\&=\mx F^{\beta-\gamma}\left([\mx B\ |\ \mx C][\mx B\ |\ \mx C]^T\right)\mx F^{\beta-\gamma}=\mx F^{\beta-\gamma}\left([\mx B\ |\ \mx C]\left[ \begin{array}{c}
\mx B^T \\
\hline
\mx C^T 
\end{array} \right]\right)\mx F^{\beta-\gamma}\notag\\&=\mx F^{\beta-\gamma}(\mx B\mx B^T+\mx C\mx C^T)\mx F^{\beta-\gamma}=\mx F^{\beta-\gamma}\mx B\mx B^T\mx F^{\beta-\gamma}+\mx F^{\beta-\gamma}\mx C\mx C^T\mx F^{\beta-\gamma}\notag\\&=(\mx F^{\beta-\gamma}\mx B)(\mx F^{\beta-\gamma}\mx B)^T+(\mx F^{\beta-\gamma}\mx C)(\mx F^{\beta-\gamma}\mx C)^T.
\end{align}
Here the matrix $(\mx F^{\beta-\gamma}\mx C)(\mx F^{\beta-\gamma}\mx C)^T$ is clearly positive semidefinite, and thus \cite[Corollary 4.3.12]{matrixanalysis} implies that
\[
\kappa(\mx M_{S,f}^{\alpha, \beta, \gamma, \gamma})\geq\kappa((\mx F^{\beta-\gamma}\mx B)(\mx F^{\beta-\gamma}\mx B)^T).
\]
Let us then consider the $n\times n$ matrix $\mx B=(b_{ij})$ with
\[
b_{ij}=\left\{ \begin{array}{ll}
\sqrt{(f_d^{\alpha-\beta}*\mu_P)(\hat{0},x_j)} & \textrm{if}\ x_j\preceq x_i,\\
0 & \textrm{otherwise.}\\
\end{array}\right.
\]
Let $\mx E$ be the matrix defined in Proposition \ref{edet-lause1} and $\mx D=\text{diag}(d_1,\ldots,d_n)$, where
\[d_i=\sqrt{(f_d^{\alpha-\beta}*\mu_P)(\hat{0},x_i)}.\]
The matrix $\mx B$ can now be written as
\[\mx B=\mx E\mx D.\]
In addition,
\begin{align*}
\det(\mx F^{\beta-\gamma}\mx B)&=\det(\mx F^{\beta-\gamma})\det(\mx E)\det(\mx D)\\&=\prod_{i=1}^n [f(x_i)]^{\beta-\gamma}\cdot 1\cdot\prod_{i=1}^n\sqrt{(f_d^{\alpha-\beta}*\mu_P)(\hat{0},x_i)}\neq0,
\end{align*}
which means that the matrix $\mx F^{\beta-\gamma}\mx B$ is invertible.
Therefore the greatest eigenvalue of the matrix \[[(\mx F^{\beta-\gamma}\mx B)(\mx F^{\beta-\gamma}\mx B)^T]^{-1}=((\mx F^{\beta-\gamma}\mx B)^{-1})^T(\mx F^{\beta-\gamma}\mx B)^{-1}\] is equal to \[\rho([(\mx F^{\beta-\gamma}\mx B)(\mx F^{\beta-\gamma}\mx B)^T]^{-1})=\matrixnorm{[(\mx F^{\beta-\gamma}\mx B)(\mx F^{\beta-\gamma}\mx B)^T]^{-1}}_S.\]
Thus
\begin{align*}
\kappa((\mx F^{\beta-\gamma}\mx B)(\mx F^{\beta-\gamma}\mx B)^T)&=\frac{1}{\rho([(\mx F^{\beta-\gamma}\mx B)(\mx F^{\beta-\gamma}\mx B)^T]^{-1})}\\&=\frac{1}{\matrixnorm{[(\mx F^{\beta-\gamma}\mx B)(\mx F^{\beta-\gamma}\mx B)^T]^{-1}}_S}.
\end{align*}
The assumption about the positiveness implies that
\begin{align*}
\matrixnorm{(\mx D^2)^{-1}}_S&=\matrixnorm{\text{diag}(\frac{1}{(f_d^{\alpha-\beta}*\mu_P)(\hat{0},x_1)},\ldots,\frac{1}{(f_d*\mu_P)(\hat{0},x_n)})}_S\\
&=\max_{1\leq i\leq n}\frac{1}{(f_d^{\alpha-\beta}*\mu_P)(\hat{0},x_i)}=\frac{1}{\min_{1\leq i\leq n}(f_d^{\alpha-\beta}*\mu_P)(\hat{0},x_i)}.
\end{align*}

Similarly,
\begin{align*}
\matrixnorm{(\mx F^{2(\beta-\gamma)})^{-1}}_S&=\matrixnorm{\text{diag}(\frac{1}{[f(x_1)]^{2(\beta-\gamma)}},\ldots,\frac{1}{[f(x_n)]^{2(\beta-\gamma)}})}_S\\
&=\max_{1\leq i\leq n}\frac{1}{[f(x_i)]^{2(\beta-\gamma)}}=\frac{1}{\min_{1\leq i\leq n}[f(x_i)]^{2(\beta-\gamma)}}.
\end{align*}

Applying the submultiplicativity of the spectral norm yields
\begin{align*}
&\matrixnorm{[(\mx F^{\beta-\gamma}\mx B)(\mx F^{\beta-\gamma}\mx B)^T]^{-1}}_S\\&=\matrixnorm{(\mx F^{\beta-\gamma}\mx E\mx D\mx D^T\mx E^T(\mx F^{\beta-\gamma})^T)^{-1}}_S\\&=\matrixnorm{(\mx F^{\beta-\gamma}\mx E\mx D^2\mx E^T\mx F^{\beta-\gamma})^{-1}}_S\\&=\matrixnorm{(\mx F^{\beta-\gamma})^{-1}(\mx E^T)^{-1}(\mx D^2)^{-1}\mx E^{-1}(\mx F^{\beta-\gamma})^{-1}}_S\\&\leq \matrixnorm{(\mx F^{\beta-\gamma})^{-1}}_S\cdot\matrixnorm{(\mx E^T)^{-1}}_S\cdot\matrixnorm{(\mx D^2)^{-1}}_S\cdot\matrixnorm{\mx E^{-1}}_S\cdot\matrixnorm{(\mx F^{\beta-\gamma})^{-1}}_S\\&=\matrixnorm{(\mx D^2)^{-1}}_S\cdot(\matrixnorm{(\mx E^{-1})^T}_S\cdot\matrixnorm{\mx E^{-1}}_S)\cdot\matrixnorm{(\mx F^{\beta-\gamma})^{-1}}_S^2\\&=\matrixnorm{(\mx D^2)^{-1}}_S\cdot\matrixnorm{(\mx E^T)^{-1}\mx E^{-1}}_S\cdot\matrixnorm{(\mx F^{2(\beta-\gamma)})^{-1}}_S\\&=\matrixnorm{(\mx D^2)^{-1}}_S\cdot\matrixnorm{(\mx E\mx E^T)^{-1}}_S\cdot\matrixnorm{(\mx F^{2(\beta-\gamma)})^{-1}}_S.
\end{align*}
Since clearly $\mx E\in K(n)$, we must have $\kappa(\mx E\mx E^T)\geq c_n$. Thus
\[
\matrixnorm{(\mx E\mx E^T)^{-1}}_S=\rho((\mx E\mx E^T)^{-1})=\frac{1}{\kappa(\mx E\mx E^T)}\leq \frac{1}{c_n},
\]
and further
\[
\frac{1}{\matrixnorm{(\mx E\mx E^T)^{-1}}_S}\geq c_n.
\]
Now combining all these results yields
\begin{align*}
\kappa(\mx M_{S,f}^{\alpha, \beta, \gamma, \gamma})&\geq\kappa((\mx F^{\beta-\gamma}\mx B)(\mx F^{\beta-\gamma}\mx B)^T)=\frac{1}{\matrixnorm{[(\mx F^{\beta-\gamma}\mx B)(\mx F^{\beta-\gamma}\mx B)^T]^{-1}}_S}\\&\geq \frac{1}{\matrixnorm{(\mx D^2)^{-1}}_S\cdot\matrixnorm{(\mx E\mx E^T)^{-1}}_S\cdot\matrixnorm{(\mx F^{2(\beta-\gamma)})^{-1}}_S}\\&=\frac{1}{\matrixnorm{(\mx E\mx E^T)^{-1}}_S}\cdot\frac{1}{\matrixnorm{(\mx D^2)^{-1}}_S}\cdot\frac{1}{\matrixnorm{(\mx F^{2(\beta-\gamma)})^{-1}}_S}\\&\geq c_n\cdot\min_{1\leq i\leq n}(f_d^{\alpha-\beta}*\mu_P)(\hat{0},x_i)\cdot\min_{1\leq i\leq n}[f(x_i)]^{2(\beta-\gamma)}.
\end{align*}
\end{proof}

\begin{example}
If $\beta=0$, we do not need to assume the semimultiplicativity of $f$ in Theorem \ref{omatulos3}. Also, in this situation, $(P,\preceq)$ does not necessarily have to be a join semilattice, and neither is the assumption about the largest element $\hat{1}$ necessary. If $\beta=0$, we have $\mx G=\mx J$ trivially. And further, if $\gamma=0$, we can also allow $f$ to have zero values and we simply have $\mx F^{\beta-\gamma}=\mx F^0=\mx I$. Thus Theorem 4.1 in \cite{IHM} is a corollary of Theorem \ref{omatulos3} concerning the matrix $\mx M_{S,f}^{1, 0, 0, 0}=(S)_f$.
\end{example}

\begin{example}[\cite{MH3}, Theorem 3.1]\label{Bangkok}
Let $(P,\preceq,\hat{0})=(\Zset_+,|,1)$. Consider the $n\times n$ matrix $\mx A_n^{\alpha,\beta}$ with
\[
(i,j)^\alpha[i,j]^\beta
\]
as its $ij$ element. Suppose that $\alpha>\beta$. Clearly $\gamma=\delta=0$, $S=\{1,\ldots,n\}=\downarrow\hspace{-1.5mm}S$ and $f=N$, where $N(m)=m$ for all $m\in\Zset_+$. The function $N$ is obviously semimultiplicative with nonzero values. In addition, since the set $\{1,\ldots,n\}$ is factor closed, we have $$\mu_P(\hat{0},w_i)=\mu(w_i/1)\text{ for all }1\leq w_i\leq n,$$ where $\mu$ denotes the number-theoretic Möbius function (see \cite[Chapter 7]{McCarthy}). Thus
\[
(f_d^{\alpha-\beta}*\mu_P)(\hat{0},w_i)=(N^{\alpha-\beta}*\mu)(w_i)=J_{\alpha-\beta}(w_i)=w_i^{\alpha-\beta}\prod_{p\,|\,w_i}\left(1-\frac{1}{p^{\alpha-\beta}}\right)>0,
\]
where $J_{\alpha-\beta}$ denotes the generalized Jordan totient function and $\ast$ is the Dirichlet convolution. Furthermore, $\min_{1\leq i\leq n}[f^2(x_i)]^{\beta-\gamma}$ is equal to either $1$ or $n^{2\beta}$. Thus by Theorem \ref{omatulos3} we have
\[
\kappa(\mx A_n^{\alpha,\beta})\geq c_n\cdot \min_{1\leq i\leq n}J_{\alpha-\beta}(i)\cdot\min\{1,n^{2\beta}\}>0.
\]
The difference between this result and Theorem 3.1 of \cite{MH3} is that in \cite{MH3} the constant $c_n$ is replaced with a larger constant $t_n$, which is obtained by calculating the smallest eigenvalue of the matrix $\mx E\mx E^T$, where $\mx E$ is the incidence matrix of the set $\{1,\ldots,n\}$ with respect to the divisor relation (which is not the matrix that yields the constant $c_n$).
\end{example}

Since we assume that $(P,\preceq)$ is not only a semilattice but a lattice, it is also possible to approach the eigenvalues of the matrix $\mx M_{S,f}^{\alpha, \beta, \gamma, \gamma}$ via the join matrix $[S]_f$. In this case we just make use of Propositions \ref{ISmotheorem2} and \ref{aat-lause2} and then proceed as in the proof of Theorem \ref{omatulos3}.

\begin{theorem}\label{omatulos4}
Let $\alpha,\beta,\gamma,\delta$ be real numbers such that $\gamma=\delta$ and the matrix $\mx M_{S,f}^{\alpha, \beta, \gamma, \gamma}$ exists. Let $f:P\to\Rset\backslash\{0\}$ be a semimultiplicative function and $\uparrow\hspace{-1mm} S=\{w_1,w_2,\ldots,w_m\}$. If $(\mu_P*f_u^{\beta-\alpha})(w_i,\hat{1})>0$ for all $w_i\in\uparrow\hspace{-1mm} S$, then
\[
\kappa(\mx M_{S,f}^{\alpha, \beta, \gamma, \gamma})\geq c_n\cdot \min_{1\leq i\leq n}(\mu_P*f_u^{\beta-\alpha})(x_i,\hat{1})\cdot\min_{1\leq i\leq n}[f^2(x_i)]^{\alpha-\gamma}.
\]
\end{theorem}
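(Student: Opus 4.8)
The plan is to run the proof of Theorem~\ref{omatulos3} with its overall skeleton unchanged, but to route everything through the join matrix $[S]_{f^{\beta-\alpha}}$ instead of the meet matrix. First I would apply Proposition~\ref{aat-lause2} to the order filter $\uparrow\hspace{-1mm} S=\{w_1,\ldots,w_m\}$, obtaining $[S]_{f^{\beta-\alpha}}=\mx A\mx A^T$ for the $n\times m$ matrix $\mx A=(a_{ij})$ with $a_{ij}=\sqrt{(\mu_P*f_u^{\beta-\alpha})(w_j,\hat{1})}$ when $x_i\preceq w_j$ and $0$ otherwise. As in Theorem~\ref{omatulos3}, reordering the $w_j$ merely permutes rows together with the matching columns and leaves the spectrum untouched, so I may assume $w_i=x_i$ for $i\le n$. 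Since $f$ takes only nonzero values and is semimultiplicative, every entry of the matrix $\mx G$ of Proposition~\ref{ISmotheorem2} equals $1$, i.e.\ $\mx G=\mx J$; combining this with $\gamma=\delta$ gives
\[
\mx M_{S,f}^{\alpha,\beta,\gamma,\gamma}=\mx F^{\alpha-\gamma}\bigl([S]_{f^{\beta-\alpha}}\circ\mx J\bigr)\mx F^{\alpha-\gamma}=\mx F^{\alpha-\gamma}(\mx A\mx A^T)\mx F^{\alpha-\gamma},
\]
where $\mx F=\mathrm{diag}(f(x_1),\ldots,f(x_n))$.

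Next I would split $\mx A=[\mx B\ |\ \mx C]$ into its first $n$ columns $\mx B$ and the remaining $n\times(m-n)$ block $\mx C$, so that $\mx A\mx A^T=\mx B\mx B^T+\mx C\mx C^T$ and hence
\[
\mx M_{S,f}^{\alpha,\beta,\gamma,\gamma}=(\mx F^{\alpha-\gamma}\mx B)(\mx F^{\alpha-\gamma}\mx B)^T+(\mx F^{\alpha-\gamma}\mx C)(\mx F^{\alpha-\gamma}\mx C)^T.
\]
The second summand is positive semidefinite, so \cite[Corollary 4.3.12]{matrixanalysis} gives $\kappa(\mx M_{S,f}^{\alpha,\beta,\gamma,\gamma})\ge\kappa\bigl((\mx F^{\alpha-\gamma}\mx B)(\mx F^{\alpha-\gamma}\mx B)^T\bigr)$, precisely as in the meet-oriented case.

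The single genuine departure from Theorem~\ref{omatulos3}, and the step I expect to need the most care, is the factorization of $\mx B$. On the columns with $w_j=x_j$ the entry $b_{ij}$ is nonzero exactly when $x_i\preceq x_j$, which by \eqref{eq:condition2} forces $i\le j$; thus $\mx B$ is \emph{upper} triangular, and with $\mx D=\mathrm{diag}(d_1,\ldots,d_n)$, $d_i=\sqrt{(\mu_P*f_u^{\beta-\alpha})(x_i,\hat{1})}$, and $\mx E$ the incidence matrix of Proposition~\ref{edet-lause2} I obtain $\mx B=\mx E^T\mx D$, with the transpose appearing in place of the $\mx B=\mx E\mx D$ of Theorem~\ref{omatulos3}. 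Here $\mx E\in K(n)$, and the positivity hypothesis makes $\mx D$, hence $\mx F^{\alpha-\gamma}\mx B$, invertible, so that $\kappa\bigl((\mx F^{\alpha-\gamma}\mx B)(\mx F^{\alpha-\gamma}\mx B)^T\bigr)=1/\matrixnorm{[(\mx F^{\alpha-\gamma}\mx B)(\mx F^{\alpha-\gamma}\mx B)^T]^{-1}}_S$. Writing the inverse as $(\mx F^{\alpha-\gamma})^{-1}\mx E^{-1}(\mx D^2)^{-1}(\mx E^T)^{-1}(\mx F^{\alpha-\gamma})^{-1}$, submultiplicativity of the spectral norm together with the transpose-invariance identity $\matrixnorm{\mx E^{-1}}_S\matrixnorm{(\mx E^T)^{-1}}_S=\matrixnorm{\mx E^{-1}}_S^2=\matrixnorm{(\mx E\mx E^T)^{-1}}_S$ bounds it by $\matrixnorm{(\mx D^2)^{-1}}_S\matrixnorm{(\mx E\mx E^T)^{-1}}_S\matrixnorm{(\mx F^{2(\alpha-\gamma)})^{-1}}_S$. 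Finally $\mx E\in K(n)$ yields $\kappa(\mx E\mx E^T)\ge c_n$, and the two diagonal spectral norms evaluate to $1/\min_i(\mu_P*f_u^{\beta-\alpha})(x_i,\hat{1})$ and $1/\min_i[f^2(x_i)]^{\alpha-\gamma}$, which assembled together give the asserted bound. The only points needing care are that the transposed triangular pattern indeed places $\mx E$ in $K(n)$, and that the product $\matrixnorm{\mx E^{-1}}_S\matrixnorm{(\mx E^T)^{-1}}_S$ equals $\matrixnorm{(\mx E\mx E^T)^{-1}}_S$ despite the factors now appearing in the order $\mx E^{-1}(\mx E^T)^{-1}$; both follow from \eqref{eq:condition2} and the spectral-norm identity $\matrixnorm{\mx X^T\mx X}_S=\matrixnorm{\mx X}_S^2$ applied to $\mx X=\mx E^{-1}$.
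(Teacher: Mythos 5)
Your proposal is correct and is essentially the paper's own argument: the paper proves Theorem \ref{omatulos4} simply by declaring it ``similar to the proof of Theorem \ref{omatulos3}'', and your write-up fills in exactly the intended join-oriented details (Propositions \ref{ISmotheorem2}, \ref{aat-lause2} and \ref{edet-lause2} in place of their meet-oriented counterparts). In particular, you correctly identify and resolve the only genuine structural difference, namely that now $\mx B=\mx E^T\mx D$ with $\mx E\in K(n)$, so the inverse factors appear in the order $\mx E^{-1}(\mx D^2)^{-1}(\mx E^T)^{-1}$, which is harmless since the spectral norm is transpose-invariant and $\matrixnorm{\mx E^{-1}}_S^2=\matrixnorm{(\mx E\mx E^T)^{-1}}_S$.
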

\begin{proof}
The proof is similar to the proof of Theorem \ref{omatulos3}.
\end{proof}

\begin{example}
Theorem 5.1 in \cite{IHM} follows directly from Theorem \ref{omatulos4}. In this case $\alpha=0$, and therefore $f$ does not need to be semimultiplicative, nor does $(P,\prec)$ need to be a meet semilattice with $\hat{0}$ as the smallest element. If also $\gamma=0$, then trivially $\mx F^{\alpha-\gamma}=\mx I$ and the image of $f$ does not have to be restricted to nonzero values.
\end{example}

Theorems \ref{omatulos3} and \ref{omatulos4} provide two different approaches to the smallest eigenvalue of $\mx M_{S,f}^{\alpha, \beta, \gamma, \gamma}$. It should be noted that the bounds obtained by using these theorems may differ greatly (provided that both theorems are applicable). For example, if the set $\downarrow\hspace{-1.5mm} S$ is much larger than the set $\uparrow\hspace{-1.5mm} S$, then the elements in the difference matrix $(\mx F^{\beta-\gamma}\mx C)(\mx F^{\beta-\gamma}\mx C)^T$ in the proof of Theorem \ref{omatulos3} are likely to be large, which also indicates much poorer lower bound. If the set $\uparrow\hspace{-1mm} S$ is large compared to $\downarrow\hspace{-1mm} S$, then the bound in Theorem \ref{omatulos3} is likely to be much better.

\section{Eigenvalue bound for the combined meet and join matrix of a meet or join closed set}

So far we have been studying the matrix $\mx M_{S,f}^{\alpha, \beta, \gamma, \delta}$ only under the circumstances that it is positive definite. Even if this is not the case, it may still be possible to define regions in the complex plain that contain the eigenvalues. It is then easy to apply these results, for example to a reciprocal matrix with
\[\frac{f(x_i\wedge x_j)}{f(x_i\vee x_j)}\quad \text{or}\quad \frac{f(x_i\vee x_j)}{f(x_i\wedge x_j)}\]
as its $ij$ element. Next we consider the cases when the set $S$ is closed under either operation $\wedge$ or $\vee$. The next theorem is in fact a generalization of Theorem 4.1 in \cite{IHM}.

\begin{theorem}\label{omatulos1}
Let $S$ be a meet closed set, $f$ be a function $P\to\Cset$ and $\alpha,\beta,\gamma,\delta$ be real numbers such that $\gamma=\delta$ and the matrix $\mx M_{S,f}^{\alpha, \beta, \gamma, \gamma}$ exists. If
\begin{equation}
\left|\frac{f(x_i\wedge x_j)f(x_i\vee x_j)}{f(x_i)f(x_j)}\right|^\beta\leq1
\label{eq:ehto2}
\end{equation}
for all $i,j\in\{1,2,\ldots,n\}$, then all the eigenvalues of the matrix $\mx M_{S,f}^{\alpha, \beta, \gamma, \gamma}$ lie in the region
\[
\bigcup_{k=1}^n\hspace{-1mm}\left\{ z\in\Cset\,\Big|\,\abs{z-f(x_k)^{\alpha+\beta-2\gamma}}\leq C_n\cdot\max_{1\leq i\leq n}\abs{f(x_i)}^{2(\beta-\gamma)}\cdot\max_{1\leq i\leq n}\abs{d_i}-\abs{f(x_k)}^{\alpha+\beta-2\gamma} \right\}\hspace{-1mm},
\]
where
\[
d_i=\sum_{\substack{z\preceq x_i \\ z\npreceq x_j\ \mathrm{for}\ j<i}} (f_d^{\alpha-\beta}*\mu_P)(\hat{0},z).
\]
\end{theorem}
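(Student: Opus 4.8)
The plan is to reduce the eigenvalue localization to a single nonnegative symmetric comparison matrix built from the meet-oriented factorization, and then to apply a Gershgorin-type inclusion in which the disc radii are governed by the \emph{spectral radius} of that comparison matrix rather than by ordinary deleted row sums. First I would invoke Proposition \ref{ISmotheorem1} with $\gamma=\delta$ to write $\mx M_{S,f}^{\alpha,\beta,\gamma,\gamma}=\mx F^{\beta-\gamma}((S)_{f^{\alpha-\beta}}\circ\mx G)\mx F^{\beta-\gamma}$, and then pull the diagonal factors inside the Hadamard product by Lemma \ref{Hadamardin tulo}, obtaining $\mx M_{S,f}^{\alpha,\beta,\gamma,\gamma}=\mx G\circ\mx H$ with $\mx H=\mx F^{\beta-\gamma}(S)_{f^{\alpha-\beta}}\mx F^{\beta-\gamma}$. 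Since $(\mx G)_{kk}=1$ (because $x_k\preceq x_k$), the diagonal of $\mx M_{S,f}^{\alpha,\beta,\gamma,\gamma}$ coincides with that of $\mx H$, and a direct computation yields the disc centres $m_{kk}=f(x_k)^{\alpha+\beta-2\gamma}$.

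Because $S$ is meet closed, Proposition \ref{edet-lause1} applied to the meet matrix $(S)_{f^{\alpha-\beta}}$ gives $(S)_{f^{\alpha-\beta}}=\mx E\mx D\mx E^T$ with $\mx E\in K(n)$ and $\mx D=\mathrm{diag}(d_1,\ldots,d_n)$, the $d_i$ being exactly those of the statement. I would then form the nonnegative symmetric matrix $\tilde{\mx H}=\abs{\mx F}^{\beta-\gamma}\mx E\,\abs{\mx D}\,\mx E^T\abs{\mx F}^{\beta-\gamma}$, where $\abs{\mx F}^{\beta-\gamma}=\mathrm{diag}(\abs{f(x_i)}^{\beta-\gamma})$ and $\abs{\mx D}=\mathrm{diag}(\abs{d_i})$. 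Hypothesis \eqref{eq:ehto2} gives $\abs{\mx G}\leqslant\mx J$, and combining this with the triangle inequality applied to the triple product shows $\abs{\mx M_{S,f}^{\alpha,\beta,\gamma,\gamma}}\leqslant\tilde{\mx H}$ entrywise. Expanding the diagonal gives $\tilde H_{kk}=\abs{f(x_k)}^{2(\beta-\gamma)}\sum_{x_l\preceq x_k}\abs{d_l}\ge\abs{f(x_k)}^{2(\beta-\gamma)}\bigl|\sum_{x_l\preceq x_k}d_l\bigr|=\abs{f(x_k)}^{\alpha+\beta-2\gamma}=\abs{m_{kk}}$, so the comparison matrix dominates $\abs{\mx M_{S,f}^{\alpha,\beta,\gamma,\gamma}}$ and has diagonal at least $\abs{m_{kk}}$.

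Writing $\tilde{\mx H}=(\abs{\mx F}^{\beta-\gamma}\mx E)\abs{\mx D}(\abs{\mx F}^{\beta-\gamma}\mx E)^T$ and using submultiplicativity of the spectral norm together with $\matrixnorm{\mx E\mx E^T}_S=\rho(\mx E\mx E^T)\le C_n$ (valid since $\mx E\in K(n)$), I would bound $\rho(\tilde{\mx H})=\matrixnorm{\tilde{\mx H}}_S\le C_n\cdot\max_i\abs{f(x_i)}^{2(\beta-\gamma)}\cdot\max_i\abs{d_i}$, which is precisely the quantity in the theorem. The closing step is the inclusion theorem for a matrix dominated by a nonnegative one, the same device underlying Theorem 4.1 of \cite{IHM}: every eigenvalue $\lambda$ of a matrix $\mx A$ with $\abs{\mx A}\leqslant\mx B$ and $\mx B\geqslant0$ satisfies $\abs{\lambda-a_{kk}}\le\rho(\mx B)-b_{kk}$ for at least one $k$. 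Applying this with $\mx A=\mx M_{S,f}^{\alpha,\beta,\gamma,\gamma}$ and $\mx B=\tilde{\mx H}$ places the eigenvalues in $\bigcup_k\{\,z:\abs{z-m_{kk}}\le\rho(\tilde{\mx H})-\tilde H_{kk}\,\}$, and the two estimates $\rho(\tilde{\mx H})\le C_n\max_i\abs{f(x_i)}^{2(\beta-\gamma)}\max_i\abs{d_i}$ and $\tilde H_{kk}\ge\abs{m_{kk}}$ enlarge each disc into the one claimed.

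I expect the inclusion theorem itself to be the main obstacle. Its radius is the spectral radius of the comparison matrix minus a diagonal entry, not an ordinary deleted row sum; in fact for simple instances such as the MIN matrix the deleted row sums exceed the claimed radii, so plain Gershgorin is strictly too weak and cannot prove the statement. The correct inclusion instead rests on a Perron--Frobenius argument: from $\mx Av=\lambda v$ one obtains $(\abs{\lambda-a_{kk}}+b_{kk})\abs{v_k}\le(\mx B\abs{v})_k$ for every $k$, and were $\abs{\lambda-a_{kk}}+b_{kk}>\rho(\mx B)$ to hold for all $k$ with $v_k\neq0$, the principal submatrix of $\mx B$ supported on those indices would have spectral radius exceeding $\rho(\mx B)$, contradicting the monotonicity of the spectral radius under passage to principal submatrices of a nonnegative matrix. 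The remaining ingredients---the two factorizations, pushing the absolute values through the products, and the norm estimate---are routine consequences of the propositions already recorded.
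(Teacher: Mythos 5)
Your proposal is correct and follows essentially the same route as the paper: the meet-oriented factorization (Proposition \ref{ISmotheorem1}) combined with Lemma \ref{Hadamardin tulo} and Proposition \ref{edet-lause1}, the entrywise domination $\abs{\mx M_{S,f}^{\alpha,\beta,\gamma,\gamma}}\leqslant\abs{\mx F}^{\beta-\gamma}\mx E\abs{\mx D}\mx E^T\abs{\mx F}^{\beta-\gamma}$, the spectral-norm bound via $\matrixnorm{\mx E\mx E^T}_S\leq C_n$, and the Ky Fan--type inclusion theorem (the paper cites it as \cite[Theorem 8.2.9]{matrixanalysis} rather than reproving it). The only cosmetic difference is that you apply the inclusion theorem to the outer comparison matrix $\tilde{\mx H}$ and then use $\tilde H_{kk}\geq\abs{f(x_k)}^{\alpha+\beta-2\gamma}$, whereas the paper applies it to $\abs{\mx F}^{\beta-\gamma}\abs{(S)_{f^{\alpha-\beta}}}\abs{\mx F}^{\beta-\gamma}$, whose diagonal equals $\abs{f(x_k)}^{\alpha+\beta-2\gamma}$ exactly, and bounds its spectral radius by that of $\tilde{\mx H}$ via monotonicity; both reorganizations yield the stated region.
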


\begin{proof}
It follows from condition \eqref{eq:ehto2} that the matrix $\mx G=[g_{ij}]$ defined in Proposition \ref{ISmotheorem1} satisfies
\[
\abs{g_{ij}}=\left|\frac{f(x_i\wedge x_j)f(x_i\vee x_j)}{f(x_i)f(x_j)}\right|^\beta\leq1,
\]
which implies that $\abs{\mx G}\leqslant\mx J$. Let $\mx E$ now be the matrix defined in Proposition \ref{edet-lause1}, $\mx D=\text{diag}(d_1,d_2,\ldots,d_n)$ and 
\[\mx \Lambda=\abs{\mx D}^{\frac{1}{2}}=\text{diag}(\sqrt{\abs{d_1}},\sqrt{\abs{d_2}},\ldots,\sqrt{\abs{d_n}}),\] where
\[
d_i=\sum_{\substack{z\preceq x_i \\ z\npreceq x_j\ \mathrm{for}\ j<i}} (f_d^{\alpha-\beta}*\mu_P)(\hat{0},z).
\]
According to Proposition \ref{edet-lause1}, we have $(S)_{f^{\alpha-\beta}}=\mx E\mx D\mx E^T$. By using the above notations, Proposition \ref{ISmotheorem1} and Lemma \ref{Hadamardin tulo} we obtain
\begin{align*}
\abs{\mx M_{S,f}^{\alpha, \beta, \gamma, \gamma}}=&\abs{\mx F^{\beta-\gamma}((S)_{f^{\alpha-\beta}}\circ \mx G)\mx F^{\beta-\gamma}}=
\abs{(\mx F^{\beta-\gamma}(S)_{f^{\alpha-\beta}}\mx F^{\beta-\gamma})\circ \mx G}\\
=&\abs{\mx F^{\beta-\gamma}(S)_{f^{\alpha-\beta}}\mx F^{\beta-\gamma}}\circ \abs{\mx G}\leqslant
\abs{(\mx F^{\beta-\gamma}(S)_{f^{\alpha-\beta}}\mx F^{\beta-\gamma})}\circ \mx J\\
=&\abs{\mx F^{\beta-\gamma}(S)_{f^{\alpha-\beta}}\mx F^{\beta-\gamma}}
=\abs{\mx F^{\beta-\gamma}}\abs{(S)_{f^{\alpha-\beta}}}\abs{\mx F^{\beta-\gamma}}\\
=&\abs{\mx F}^{\beta-\gamma}\abs{\mx E\mx D\mx E^T}\abs{\mx F}^{\beta-\gamma}
\leqslant\abs{\mx F}^{\beta-\gamma}\mx E\abs{\mx D}\mx E^T\abs{\mx F}^{\beta-\gamma}\\
=&\abs{\mx F}^{\beta-\gamma}\mx E\mx \Lambda\mx \Lambda^T\mx E^T\abs{\mx F}^{\beta-\gamma}
=(\abs{\mx F}^{\beta-\gamma}\mx E\mx \Lambda)(\abs{\mx F}^{\beta-\gamma}\mx E\mx \Lambda)^T.
\end{align*}
With Theorem 8.1.18 in \cite{matrixanalysis} we now have
\[
\rho(\abs{\mx F}^{\beta-\gamma}\abs{(S)_{f^{\alpha-\beta}}}\abs{\mx F}^{\beta-\gamma})\leq\rho(\abs{\mx F}^{\beta-\gamma}\mx E\mx \Lambda\mx \Lambda^T\mx E^T\abs{\mx F}^{\beta-\gamma}).
\]
In addition,
\begin{align}\label{eq:spektraali}
\rho(\abs{\mx F}^{\beta-\gamma}\mx E\mx \Lambda&\mx \Lambda^T\mx E^T\abs{\mx F}^{\beta-\gamma})=\matrixnorm{\abs{\mx F}^{\beta-\gamma}\mx E\mx \Lambda\mx \Lambda^T\mx E^T\abs{\mx F}^{\beta-\gamma}}_S\notag\\
&\leq \matrixnorm{\abs{\mx F}^{\beta-\gamma}}_S\matrixnorm{\mx E}_S\matrixnorm{\mx \Lambda\mx \Lambda^T}_S\matrixnorm{\mx E^T}_S\matrixnorm{\abs{\mx F}^{\beta-\gamma}}_S\notag\\
&=\matrixnorm{\abs{\mx F}^{2(\beta-\gamma)}}_S\matrixnorm{\mx E\mx E^T}_S\matrixnorm{\abs{\mx D}}_S\notag\\
&\leq \max_{1\leq i\leq n}\abs{f(x_i)}^{2(\beta-\gamma)}\cdot C_n\cdot\max_{1\leq i\leq n}\abs{d_i}.
\end{align}
Since $(\mx M_{S,f}^{\alpha, \beta, \gamma, \gamma})_{ii}=f(x_i)^{\alpha+\beta-2\gamma}$ and 
\[(\abs{\mx F}^{\beta-\gamma}\abs{(S)_{f^{\alpha-\beta}}}\abs{\mx F}^{\beta-\gamma})_{ii}=\abs{f(x_i)}^{\alpha+\beta-2\gamma},\]
by using \eqref{eq:spektraali} and by setting $\mx A=\mx M_{S,f}^{\alpha, \beta, \gamma, \delta}$ and $\mx B=\abs{\mx F}^{\beta-\gamma}\abs{(S)_{f^{\alpha-\beta}}}\abs{\mx F}^{\beta-\gamma}$ in \cite[Theorem 8.2.9]{matrixanalysis} it now follows that all the eigenvalues of the matrix $\mx M_{S,f}^{\alpha, \beta, \gamma, \gamma}$ belong to the above-mentioned region.
\end{proof}

\begin{example}
Theorem 4.1 in \cite{IHM} is a consequence of Theorem \ref{omatulos1} We only need to choose $\alpha=1$ and $\beta=\gamma=\delta=0$. Condition \eqref{eq:ehto2} is now trivially satisfied.
\end{example}

\begin{example}\label{resiprookkimatriisi}
Let $S$ be meet closed. Let us consider the reciprocal matrix with $\frac{f(x_i\vee x_j)}{f(x_i\wedge x_j)}$ as its $ij$ element. Thus in this case $\alpha=-1$, $\beta=1$ and $\gamma=\delta=0$. Now if 
\[
\left|\frac{f(x_i\wedge x_j)f(x_i\vee x_j)}{f(x_i)f(x_j)}\right|\leq1
\]
for all $i,j\in\{1,2,\ldots,n\}$, then according to Theorem \ref{omatulos1} all the eigenvalues of the matrix $\mx M_{S,f}^{-1, 1, 0, 0}$ belong to the region
\[
\bigcup_{k=1}^n \left\{ z\in\Cset\ \Big|\ \abs{z-1}\leq C_n\cdot\max_{1\leq i\leq n}\abs{f(x_i)}^2\cdot\max_{1\leq i\leq n}\abs{d_i}-1 \right\} ,
\]
where
\[
d_i=\sum_{\substack{z\preceq x_i \\ z\npreceq x_j\ \textrm{for}\ j<i}} (f_d^{-2}*\mu_P)(\hat{0},z).
\]
Since every set in this union is a disc around $1$, the one with the largest radius also contains all the eigenvalues of the matrix $\mx M_{S,f}^{-1, 1, 0, 0}$.
\end{example}

\begin{example}[\cite{MH3}, Theorem 3.5]
Let $\mx A_n^{\alpha,\beta}$ be the matrix defined in Example \ref{Bangkok}. By applying Theorem \ref{omatulos1} to this matrix, it is easy to see that all the eigenvalues of the matrix $\mx A_n^{\alpha,\beta}$ belong to the region
\[
\bigcup_{k=1}^n\big\{z\in\Cset\ \Big|\ |z-k^{\alpha+\beta}|\leq C_n\cdot\max\{1,n^{2\beta}\}\cdot\max_{1\leq i\leq n}|J_{\alpha-\beta}(i)|-k^{\alpha+\beta}\big\}.
\]
Proceeding now as in the proof of Theorem 3.5 in \cite{MH3} it is possible to show that this union is in fact the real interval $[2\min\{1,n^{\alpha+\beta}\}-H_n,H_n]$, where $H_n=C_n\cdot\max\{1,n^{2\beta}\}\cdot\max_{1\leq i\leq n}|J_{\alpha-\beta}(i)|$.
Also in this case it would be possible to replace the constant $C_n$ with a bit better (i.e. smaller) constant, which can be obtained by using the  exact incidence matrix of the set $\{1,2,\ldots,n\}$.
\end{example}

The next theorem is a result similar to Theorem \ref{omatulos1}, but it is for a join closed set $S$ and is based on Propositions \ref{ISmotheorem2} and \ref{edet-lause2}. The proof is omitted, as it is very similar to the proof of Theorem \ref{omatulos1}.

\begin{theorem}\label{omatulos2}
Let $S$ be a join closed set, $f$ be a function $P\to\Cset$ and $\alpha,\beta,\gamma,\delta$ be real numbers such that $\gamma=\delta$ and the matrix $\mx M_{S,f}^{\alpha, \beta, \gamma, \gamma}$ exists. If
\begin{equation}
\left|\frac{f(x_i\wedge x_j)f(x_i\vee x_j)}{f(x_i)f(x_j)}\right|^\alpha\leq1
\label{eq:ehto4}
\end{equation}
for all $i,j\in\{1,2,\ldots,n\}$, then all the eigenvalues of the matrix $\mx M_{S,f}^{\alpha, \beta, \gamma, \gamma}$ belong to the region
\[
\bigcup_{k=1}^n\hspace{-1mm}\left\{ z\in\Cset\,\Big|\,\abs{z-f(x_k)^{\alpha+\beta-2\gamma}}\leq C_n\cdot\max_{1\leq i\leq n}\abs{f(x_i)}^{2(\alpha-\gamma)}\cdot\max_{1\leq i\leq n}\abs{d_i}-\abs{f(x_k)}^{\alpha+\beta-2\gamma} \right\},
\]
where
\[
d_i=\sum_{\substack{x_i\preceq z \\ x_j\npreceq z\ \mathrm{for}\ i<j}} (\mu_P*f_u^{\beta-\alpha})(z,\hat{1}).
\]
\end{theorem}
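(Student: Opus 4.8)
The plan is to follow the proof of Theorem \ref{omatulos1} almost verbatim, replacing the meet-oriented structure theorem (Proposition \ref{ISmotheorem1}) by the join-oriented one (Proposition \ref{ISmotheorem2}) and the factorization $(S)_{f^{\alpha-\beta}}=\mx E\mx D\mx E^T$ of Proposition \ref{edet-lause1} by the factorization $[S]_{f^{\beta-\alpha}}=\mx E^T\mx D\mx E$ of Proposition \ref{edet-lause2}. First I would observe that, in the matrix $\mx G$ of Proposition \ref{ISmotheorem2}, every off-diagonal ``otherwise'' entry equals $\frac{f^\alpha(x_i\wedge x_j)f^\alpha(x_i\vee x_j)}{f^\alpha(x_i)f^\alpha(x_j)}$, so condition \eqref{eq:ehto4} gives $\abs{g_{ij}}\leq1$ for all $i,j$, while the remaining entries equal $1$; hence $\abs{\mx G}\leqslant\mx J$.

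Next I would set up $\mx E$ and $\mx D=\mathrm{diag}(d_1,\ldots,d_n)$ as in Proposition \ref{edet-lause2} (so that $[S]_{f^{\beta-\alpha}}=\mx E^T\mx D\mx E$ with the $d_i$ given in the statement) and put $\mx\Lambda=\abs{\mx D}^{1/2}$. Using $\gamma=\delta$, Proposition \ref{ISmotheorem2}, Lemma \ref{Hadamardin tulo}, and the compatibility of $\abs{\cdot}$ with the Hadamard and the ordinary matrix product, I would push the absolute values through the factorization to obtain
\[
\abs{\mx M_{S,f}^{\alpha,\beta,\gamma,\gamma}}\leqslant\abs{\mx F}^{\alpha-\gamma}\mx E^T\mx\Lambda\mx\Lambda^T\mx E\abs{\mx F}^{\alpha-\gamma}=(\abs{\mx F}^{\alpha-\gamma}\mx E^T\mx\Lambda)(\abs{\mx F}^{\alpha-\gamma}\mx E^T\mx\Lambda)^T.
\]
The only bookkeeping difference from Theorem \ref{omatulos1} is that the join-oriented theorem carries the prefactor $\mx F^{\alpha-\gamma}$ rather than $\mx F^{\beta-\gamma}$, which is exactly why the bound will feature $\max_{1\leq i\leq n}\abs{f(x_i)}^{2(\alpha-\gamma)}$ in place of $\max_{1\leq i\leq n}\abs{f(x_i)}^{2(\beta-\gamma)}$.

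Then I would apply \cite[Theorem 8.1.18]{matrixanalysis} to the componentwise inequality above and bound the resulting spectral radius by submultiplicativity of the spectral norm, exactly as in \eqref{eq:spektraali}, using $\matrixnorm{\abs{\mx F}^{2(\alpha-\gamma)}}_S=\max_{1\leq i\leq n}\abs{f(x_i)}^{2(\alpha-\gamma)}$, $\matrixnorm{\abs{\mx D}}_S=\max_{1\leq i\leq n}\abs{d_i}$, and $\matrixnorm{\mx E^T\mx E}_S=\matrixnorm{\mx E\mx E^T}_S\leq C_n$, the last equality holding because $\mx E^T\mx E$ and $\mx E\mx E^T$ share the same nonzero eigenvalues and $\mx E\in K(n)$. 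Finally, computing the diagonal entries $(\mx M_{S,f}^{\alpha,\beta,\gamma,\gamma})_{ii}=f(x_i)^{\alpha+\beta-2\gamma}$ and $(\abs{\mx F}^{\alpha-\gamma}\abs{[S]_{f^{\beta-\alpha}}}\abs{\mx F}^{\alpha-\gamma})_{ii}=\abs{f(x_i)}^{\alpha+\beta-2\gamma}$, I would invoke \cite[Theorem 8.2.9]{matrixanalysis} with $\mx A=\mx M_{S,f}^{\alpha,\beta,\gamma,\gamma}$ and $\mx B=\abs{\mx F}^{\alpha-\gamma}\abs{[S]_{f^{\beta-\alpha}}}\abs{\mx F}^{\alpha-\gamma}$ to place all eigenvalues in the stated union of discs.

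I do not expect a genuine obstacle, only two points requiring care. The first is that Proposition \ref{edet-lause2} produces the factor in the order $\mx E^T\mx D\mx E$ rather than $\mx E\mx D\mx E^T$; one must check that the spectral-norm estimate is unaffected, which it is because $\matrixnorm{\mx E^T\mx E}_S=\matrixnorm{\mx E\mx E^T}_S\leq C_n$. The second is ensuring the Hadamard and absolute-value inequalities are applied in the correct order so that the positive semidefinite upper bound $\mx B$ is genuinely componentwise above $\abs{\mx M_{S,f}^{\alpha,\beta,\gamma,\gamma}}$; this is immediate from Lemma \ref{Hadamardin tulo} together with $\abs{\mx G}\leqslant\mx J$.
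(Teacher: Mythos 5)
Your proposal is correct and is exactly the argument the paper intends: the paper omits this proof as ``very similar to the proof of Theorem \ref{omatulos1}'', and you carry out precisely that adaptation, substituting Proposition \ref{ISmotheorem2} for Proposition \ref{ISmotheorem1}, Proposition \ref{edet-lause2} for Proposition \ref{edet-lause1}, and tracking the resulting change from $\mx F^{\beta-\gamma}$ to $\mx F^{\alpha-\gamma}$. Your two points of care (the order $\mx E^T\mx D\mx E$ versus $\mx E\mx D\mx E^T$, handled via $\matrixnorm{\mx E^T\mx E}_S=\matrixnorm{\mx E\mx E^T}_S\leq C_n$, and the order of the Hadamard/absolute-value estimates) are resolved correctly.
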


\begin{example}
Theorem 6.1 in \cite{IHM} is a consequence of Theorem \ref{omatulos2} and is obtained by setting $\beta=1$ and $\alpha=\gamma=\delta=0$. The condition \eqref{eq:ehto4} holds trivially.
\end{example}

\begin{example}
Let $S$ be join closed. Consider the reciprocal matrix with $\frac{f(x_i\wedge x_j)}{f(x_i\vee x_j)}$ as its $ij$ element. Now $\alpha=1$, $\beta=-1$ and $\gamma=\delta=0$. If also 
\[
\left|\frac{f(x_i\wedge x_j)f(x_i\vee x_j)}{f(x_i)f(x_j)}\right|\leq1
\]
for all $i,j\in\{1,2,\ldots,n\}$, then all the eigenvalues of the matrix $\mx M_{S,f}^{1, -1, 0, 0}$ belong to the region
\[
\bigcup_{k=1}^n \left\{ z\in\Cset\ \Big|\ \abs{z-1}\leq C_n\cdot\max_{1\leq i\leq n}\abs{f(x_i)}^2\cdot\max_{1\leq i\leq n}\abs{d_i}-1 \right\},
\]
where
\[
d_i=\sum_{\substack{x_i\preceq z \\ x_j\npreceq z\ \textrm{for}\ i<j}} (\mu_P*f_u^{-2})(z,\hat{1}).
\]
Just like in Example \ref{resiprookkimatriisi}, also in this case we are able to define a disc around $1$ that contains all the eigenvalues of $\mx M_{S,f}^{1, -1, 0, 0}$.
\end{example}

\begin{remark}
If the function $f$ is semimultiplicative, then 
\[
\left|\frac{f(x_i\wedge x_j)f(x_i\vee x_j)}{f(x_i)f(x_j)}\right|^\alpha=\left|\frac{f(x_i\wedge x_j)f(x_i\vee x_j)}{f(x_i)f(x_j)}\right|^\beta=1
\]
for all $i,j\in\{1,2,\ldots,n\}$. Thus a semimultiplicative function automatically satisfies conditions \eqref{eq:ehto2} and \eqref{eq:ehto4}.
\end{remark}

We conclude this section by considering some classical examples.

\begin{example}
Wintner \cite{Wintner} and subsequently also Linqvist and Seip \cite{lindqvist} studied the $n\times n$ matrix with
\[
\left(\frac{\gcd(i,j)}{\text{lcm}(i,j)}\right)^\alpha
\]
as its $ij$ element ($\alpha\in\Rset$). Here we have $S=\{1,2,\ldots,n\}$ and $(P,\preceq)$ may be taken to be $(\Zset_+,|)$. The set $S$ is clearly meet closed. Further we have $\beta=-\alpha$, $\gamma=\delta=0$ and $f=N$, which is trivially semimultiplicative. Thus condition \eqref{eq:ehto2} is satisfied and, with Theorem \ref{omatulos1}, all the eigenvalues of the matrix $\mx M_{S,f}^{\alpha, -\alpha, 0, 0}$ belong to the region
\[
\bigcup_{k=1}^n \left\{ z\in\Cset\ \Big|\ \abs{z-1}\leq C_n\cdot\max_{1\leq i\leq n}i^{-2\alpha}\cdot\max_{1\leq i\leq n}\abs{d_i}-1 \right\} ,
\]
where
\[
d_i=\sum_{\substack{z\mid i \\ z\nmid j\ \textrm{for}\ j<i}} (N^{2\alpha}*\mu)(z),
\]
$\mu$ is the number-theoretic Möbius function and $*$ is the Dirichlet convolution. Since the only number $z$ that satisfies $z\mid i$ and $z\nmid j$ when $j<i$ is the number $i$ itself, $d_i$ simplifies into
\[
d_i=(N^{2\alpha}*\mu)(i)=J_{2\alpha}(i),
\]
where $J_{2\alpha}$ is the generalized Jordan totient function. If $\alpha>0$, we even have $J_{2\alpha}(i)>0$ for all $i=1,\ldots,n$. As it was with the reciprocal matrices, also in this case this region is in fact a $1$-centered disc. But since $\mx M_{S,f}^{\alpha, -\alpha, 0, 0}$ is real and symmetric, all the eigenvalues are real. Therefore the disc may be constricted into a real interval with $1$ as its midpoint. Thus the eigenvalues of $\mx M_{S,f}^{\alpha, -\alpha, 0, 0}$ all belong to the interval
\[
\left\{ z\in\Rset\ \Big|\ \abs{z-1}\leq C_n\cdot\max_{1\leq i\leq n}i^{-2\alpha}\cdot\max_{1\leq i\leq n}J_{2\alpha}(i)-1 \right\}.
\]
In the special case when $\alpha=\frac{1}{2}$ we have 
\[N^{2\alpha}*\mu=N*\mu=\phi,\]
where $\phi$ is the Euler totient function. In this case the elements of $\mx D$ become
\[
d_i=\phi(i)>0.
\]
Since for all $i\geq2$ we have $\phi(i)\leq i-1$, it follows that $\max_{1\leq i\leq n}\phi(i)\leq n-1$. In addition, $\max_{1\leq i\leq n}i^{-1}=1$, and this maximum is obtained when $i=1$. Thus the eigenvalues of the matrix $\mx M_{S,f}^{\frac{1}{2}, -\frac{1}{2}, 0, 0}$ belong to the interval
\[
\left\{ z\in\Rset\ \Big|\ \abs{z-1}\leq C_n\cdot(n-1)-1 \right\}=[2-C_n\cdot(n-1),C_n\cdot(n-1)].
\]
\end{example}

\section{Estimating the constant \texorpdfstring{$c_n$}{cn}}\label{c_n}

The constant $c_n$ was originally defined by Hong and Loewy \cite{hong}, but they did not give any approximations for it. Ilmonen et al. \cite[Section 7]{IHM} easily found a relatively good upper bound
\begin{equation}\label{eq: vakio T_n}
T_n=\sqrt{(2n-1)+(2n-3)\cdot 4+(2n-5)\cdot 9+\cdots+3\cdot (n-1)^2+n^2}
\end{equation}
for their other constant $C_n$, but they did not manage to prove anything about the constant $c_n$. Instead they end up presenting the following conjecture.

\begin{conjecture}\label{konjektuuri}
Let $\mx Y_0=[(\mx Y_0)_{ij}]$, where
\[
(\mx Y_0)_{ij}=
\left\{ \begin{array}{llll}
0 & \textrm{if}\ j>i,\\
1 & \textrm{if}\ j=i,\\
0 & \textrm{if}\ i>j\ \textrm{and}\ i+j\ \textrm{is even,}\\
1 & \textrm{if}\ i>j\ \textrm{and}\ i+j\ \textrm{is odd.}\\
\end{array}\right.
\]
Then $c_n=\kappa(\mx Y_0\mx Y_0^T)$.
\end{conjecture}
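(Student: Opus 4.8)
The plan is to recast the definition of $c_n$ as a min–min problem and to exploit a column decoupling. Since $\mx X\mx X^T$ is positive definite for every $\mx X\in K(n)$ (as $\det\mx X=1$), its smallest eigenvalue equals $\min_{\lVert v\rVert=1}\lVert\mx X^Tv\rVert^2$, and interchanging the two minimizations gives
\[
c_n=\min_{\lVert v\rVert=1}\ \min_{\mx X\in K(n)}\lVert\mx X^Tv\rVert^2.
\]
Because $\mx X$ is lower triangular with unit diagonal, the $j$th coordinate of $\mx X^Tv$ is $v_j+\sum_{i>j}X_{ij}v_i$, and the free entries $X_{ij}$ ($i>j$) of distinct columns are independent. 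Hence the inner minimization decouples, and writing $h_j(v)=\min_{T\subseteq\{j+1,\ldots,n\}}\bigl(v_j+\sum_{i\in T}v_i\bigr)^2$ we obtain
\[
c_n=\min_{\lVert v\rVert=1}\sum_{j=1}^n h_j(v).
\]
Each column thus reduces to a one-dimensional subset-sum problem: choose $T$ so that the partial sum is as close to zero as possible. This identity is the rigorous backbone of the argument.

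First I would draw out the self-consistency consequence. If $\mx X^{\ast}$ attains $c_n$ and $w$ is a unit eigenvector of $\mx X^{\ast}(\mx X^{\ast})^T$ for its smallest eigenvalue, then
\[
c_n\le\sum_{j=1}^n h_j(w)\le\lVert(\mx X^{\ast})^Tw\rVert^2=w^T\mx X^{\ast}(\mx X^{\ast})^Tw=c_n,
\]
so equality holds throughout and every column of $\mx X^{\ast}$ must be subset-sum optimal at $w$. In other words, the minimizing pattern is a fixed point of the map sending a direction to its column-wise optimal subsets, the subsets to the corresponding $0,1$ matrix, and that matrix back to the eigenvector belonging to its smallest eigenvalue. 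The whole strategy is to prove that $\mx Y_0$ is precisely this fixed point.

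The next step is to verify that $\mx Y_0$ is self-consistent. For this I would determine the smallest eigenvalue of $\mx Y_0\mx Y_0^T$ together with an explicit eigenvector $v^{\ast}$; the banded, parity-driven shape of $\mx Y_0$ strongly suggests that $v^{\ast}$ has a closed form with alternating signs $\operatorname{sign}(v^{\ast}_i)=(-1)^{i-1}$ (already explicit for $n=2,3$), and a recursion in $n$ should pin it down. With $v^{\ast}$ in hand one checks, in each column $j$, that $T_j=\{i>j:\,i-j\text{ odd}\}$ minimizes $(v^{\ast}_j+\sum_{i\in T}v^{\ast}_i)^2$: this set collects exactly the coordinates whose sign is opposite to that of $v^{\ast}_j$, so summing them against $v^{\ast}_j$ produces cancellation, and optimality reduces to an explicit inequality among the entries of $v^{\ast}$. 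Carrying this out shows that $\mx Y_0$ is a critical (fixed-point) configuration and re-proves the easy bound $c_n\le\kappa(\mx Y_0\mx Y_0^T)$, which is in any case immediate from $\mx Y_0\in K(n)$.

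The hard part is the reverse inequality $c_n\ge\kappa(\mx Y_0\mx Y_0^T)$, that is, global optimality: one must exclude every competing fixed-point pattern. The obstruction is the coupling between the combinatorial pattern and its own extremal eigenvector, which blocks a naive induction. I would attempt an induction on $n$ supported by Cauchy interlacing to relate $c_n$ to $c_{n-1}$, together with a perturbation argument showing that altering any below-diagonal entry of $\mx Y_0$ strictly raises the smallest eigenvalue once the subset-sum optimality conditions are imposed; alternatively, one might minimize the decoupled functional $\sum_j h_j(v)$ directly over the sphere by a convexity or majorization estimate, forcing the minimizer to be $v^{\ast}$. I expect essentially all the difficulty to reside in this global step, which is consistent with the result being offered here only as a conjecture.
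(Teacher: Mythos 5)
First, be clear about the status of what you are proving: this statement is not a theorem of the paper at all. It is Conjecture~5.1, originating with Ilmonen, Haukkanen and Merikoski, and the paper explicitly leaves it open (``Calculations have shown that this conjecture is true for $n=2,3,\ldots,7$, but generally this problem is still open and appears to be quite hard to solve''); the paper only uses it hypothetically, in Theorem~5.2, to derive a conditional improvement of the lower bound for $c_n$. So there is no proof in the paper to compare yours against, and any complete argument would be a new result rather than a reconstruction.

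Second, your attempt is not a complete argument, as you yourself concede. The preparatory reduction is correct and worth keeping: the identity
\[
c_n=\min_{\lVert v\rVert=1}\ \sum_{j=1}^n\ \min_{T\subseteq\{j+1,\ldots,n\}}\Bigl(v_j+\sum_{i\in T}v_i\Bigr)^{2},
\]
obtained by exchanging the two minimizations and decoupling the columns of $\mx X^T$, is valid (the below-diagonal entries of distinct columns of $\mx X\in K(n)$ are indeed independent $0,1$ variables), and the resulting self-consistency condition on a minimizing pair $(\mx X^{\ast},w)$ is a genuine necessary condition for optimality. But everything after that is a plan, not a proof. The extremal eigenvector $v^{\ast}$ of $\mx Y_0\mx Y_0^T$ is never computed; the verification that $\mx Y_0$ is a fixed point of your map is only sketched; and the decisive step --- the inequality $c_n\geq\kappa(\mx Y_0\mx Y_0^T)$, i.e., excluding every competing pattern in $K(n)$ --- is precisely the part you defer to ``interlacing, or perturbation, or convexity'' without carrying any of them out. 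Moreover, at least the interlacing idea points the wrong way: the leading principal $(n-1)\times(n-1)$ submatrix of $\mx X\mx X^T$ equals $\mx X'(\mx X')^T$ with $\mx X'\in K(n-1)$, so Cauchy interlacing gives $\kappa(\mx X\mx X^T)\leq\kappa(\mx X'(\mx X')^T)$, hence upper bounds of the type $c_n\leq c_{n-1}$, whereas an inductive proof of the conjecture needs lower bounds on $c_n$. Nothing you wrote excludes other self-consistent patterns either, so the fixed-point reduction, while clean, does not by itself narrow the search to $\mx Y_0$. In short: a correct and potentially useful reformulation, but the only step that matters is missing --- which is consistent with the statement remaining an open conjecture.
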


Calculations have shown that this conjecture is true for $n=2,3,\ldots,7$, but generally this problem is still open and appears to be quite hard to solve. However, the next theorem shows that it is possible to obtain a lower bound for $c_n$. Unfortunately this lower bound is far from accurate and thus for the most part is only of some theoretical interest. 

\begin{theorem}\label{Mikan lemma}
The constant $c_n$ is bounded below by $\left(\frac{6}{n^4+2n^3+2n^2+n}\right)^{\frac{n-1}{2}}$. 
\end{theorem}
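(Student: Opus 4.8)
The plan is to exploit the fact that every $\mx X\in K(n)$ is lower triangular with unit diagonal, so $\det\mx X=1$ and hence $\mx X\mx X^T$ is positive definite with $\det(\mx X\mx X^T)=1$. Its eigenvalues $0<\lambda_1\leq\lambda_2\leq\cdots\leq\lambda_n$ therefore multiply to $1$, so writing $\lambda_1=1/(\lambda_2\cdots\lambda_n)$ and bounding each of the remaining $n-1$ factors by the largest eigenvalue $\rho(\mx X\mx X^T)$ yields
\[
\lambda_1\geq\frac{1}{\rho(\mx X\mx X^T)^{\,n-1}}.
\]
Thus it suffices to produce an upper bound for $\rho(\mx X\mx X^T)$ that is uniform over $K(n)$; taking reciprocals and then the minimum over $\mx X$ will give the stated estimate for $c_n$.

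First I would pass from the spectral radius to the Frobenius norm: since $\mx X\mx X^T$ is positive semidefinite and symmetric, $\rho(\mx X\mx X^T)^2\leq\sum_i\lambda_i^2=\matrixnorm{\mx X\mx X^T}_F^2$. Next I would estimate the entries of $\mx X\mx X^T$ directly. Because $\mx X$ is a lower triangular $0,1$ matrix with unit diagonal, its $i$th row is supported on $\{1,\dots,i\}$, so
\[
(\mx X\mx X^T)_{ij}=\sum_{k=1}^n x_{ik}x_{jk}
\]
is a sum of at most $\min(i,j)$ terms, each lying in $\{0,1\}$; hence $0\leq(\mx X\mx X^T)_{ij}\leq\min(i,j)$ for all $i,j$. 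The nonnegativity here is exactly what lets me pass from this entrywise bound to a bound on the squares.

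The decisive computation is then the identity
\[
\matrixnorm{\mx X\mx X^T}_F^2=\sum_{i,j=1}^n(\mx X\mx X^T)_{ij}^2\leq\sum_{i,j=1}^n\min(i,j)^2=\frac{n(n+1)(n^2+n+1)}{6},
\]
which I would verify by grouping the pairs $(i,j)$ according to the value $m=\min(i,j)$: there are $2(n-m)+1$ such pairs, so the sum equals $(2n+1)\sum_m m^2-2\sum_m m^3$, and the standard power-sum formulas collapse this to $\tfrac16 n(n+1)(n^2+n+1)=\tfrac16(n^4+2n^3+2n^2+n)$. I expect this step---and in particular the realization that the crude bound $(\mx X\mx X^T)_{ij}\le\min(i,j)$ reproduces exactly the quartic appearing in the statement---to be the crux of the argument; everything else is bookkeeping.

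Combining the pieces gives $\rho(\mx X\mx X^T)\leq\bigl(\tfrac16 n(n+1)(n^2+n+1)\bigr)^{1/2}$ uniformly for $\mx X\in K(n)$, whence
\[
\lambda_1\geq\Bigl(\tfrac{6}{n(n+1)(n^2+n+1)}\Bigr)^{(n-1)/2}=\Bigl(\tfrac{6}{n^4+2n^3+2n^2+n}\Bigr)^{(n-1)/2}.
\]
Since this lower bound is independent of the particular $\mx X\in K(n)$, taking the minimum over all such matrices yields the claimed lower bound for $c_n$.
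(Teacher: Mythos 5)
Your proof is correct, and its skeleton coincides with the paper's: both use $\det(\mx X\mx X^T)=1$ (unit diagonal of $\mx X$) to write the smallest eigenvalue as the reciprocal of the product of the remaining $n-1$ eigenvalues, each of which is then bounded by a uniform estimate for the largest eigenvalue over $K(n)$. The genuine difference lies in how that uniform estimate is justified. The paper does not prove it: it invokes the constant $T_n$ of Ilmonen, Haukkanen and Merikoski as a known upper bound for $C_n$, and even the closed form $T_n=\sqrt{\tfrac{1}{6}n(n+1)(n^2+n+1)}$ is only asserted, with the inductive proof ``omitted for the sake of brevity''. You instead derive everything from scratch: the entrywise estimate $0\leq(\mx X\mx X^T)_{ij}\leq\min(i,j)$ (valid because row $i$ of a lower triangular $0,1$ matrix is supported on $\{1,\dots,i\}$), the inequality $\rho(\mx M)^2\leq\mathrm{tr}(\mx M^2)=\matrixnorm{\mx M}_F^2$ for symmetric positive semidefinite $\mx M$, and the power-sum evaluation $\sum_{i,j}\min(i,j)^2=\tfrac{1}{6}n(n+1)(n^2+n+1)$, which is exactly $T_n^2$, since the cited expression for $T_n^2$ is $\sum_{m=1}^{n}\bigl(2(n-m)+1\bigr)m^2$ and your count of pairs with $\min(i,j)=m$ is the same $2(n-m)+1$. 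So your argument is self-contained where the paper's is not, and it incidentally supplies both the origin of $T_n$ and the closed-form computation that the paper leaves out; the paper's version, in exchange, is shorter and keeps the connection to the constants $C_n$ and $T_n$ of the earlier literature explicit, which matters elsewhere in the article. A minor structural difference is that the paper fixes the extremal matrix $\mx X_0$ attaining $c_n$, while you bound $\lambda_1(\mx X\mx X^T)$ uniformly for every $\mx X\in K(n)$ and then take the minimum; both are equally valid.
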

\begin{proof}
Let $\mx X_0\in K(n)$ be the triangular $0,1$ matrix with $c_n=\kappa(\mx X_0\mx X_0^T)$ and $\mx M_0=\mx X_0\mx X_0^T$. Let
\[
g(\lambda)=\det(\mx M_0-\lambda\mx I_n)=(-1)^n\lambda^n+a_{n-1}\lambda^{n-1}+\cdots+a_1\lambda+a_0\in\Zset[\lambda]
\]
be the characteristic polynomial of the matrix $\mx M_0$. Now
\[
g(0)=a_0=\det(\mx M_0)=\det(\mx X_0\mx X_0^T)
=\det(\mx X_0)\det(\mx X_0^T)=1^n\cdot 1^n=1,
\]
since all the diagonal elements of $\mx X_0$ are equal to $1$. Since $\mx M_0$ is clearly positive definite, let $\lambda_1,\lambda_2,\ldots,\lambda_n\in\Rset_+$ be the eigenvalues of $\mx M_0$, where
\[
0<c_n=\lambda_1\leq\lambda_2\leq\cdots\leq\lambda_n\leq C_n.
\]
Thus $g(\lambda)$ may be written as
\[
g(\lambda)=(-1)^n\cdot(\lambda-\lambda_1)(\lambda-\lambda_2)\cdots(\lambda-\lambda_n),
\]
from which we obtain
\[
1=a_0=\underbrace{\lambda_1}_{=c_n}\underbrace{\lambda_2}_{\leq C_n}\cdots\underbrace{\lambda_n}_{\leq C_n}\leq c_n(C_n)^{n-1}\leq c_nT_n^{n-1},
\]
where $T_n$ is the upper bound for $C_n$ found in \cite{IHM} and presented in \eqref{eq: vakio T_n}. By dividing this last inequality by $(T_n)^{n-1}>0$ we obtain $\left(\frac{1}{T_n}\right)^{n-1}\leq c_n$. The claim now follows by observing that $$T_n=\sqrt{\frac{1}{6}n(n+1)(n^2+n+1)}=\sqrt{\frac{1}{6}(n^4+2n^3+2n^2+n)}$$ (this can easily be proven by induction, but we omit this for the sake of brevity).
\end{proof}

If Conjecture \ref{konjektuuri} holds, then we are able to slightly improve the lower bound presented in Theorem \ref{Mikan lemma}. We only need to calculate
\begin{align*}
\mx Y_0\mx Y_0^T&=
\left[ \begin{array}{ccccccc}
1 & 0 & 0 & \cdots & 0 & 0 \\
1 & 1 & 0 & \cdots & 0 & 0 \\
0 & 1 & 1 & \cdots & 0 & 0 \\
1 & 0 & 1 & \cdots & 0 & 0 \\
0 & 1 & 0 & \  & \vdots & \vdots \\
1 & 0 & 1 & \  & \  & \  \\
\vdots & \vdots & \vdots & \cdots  & 1 & 0 \\
\vdots & \vdots & \vdots & \cdots  & 1 & 1 \\
\end{array} \right]
\left[ \begin{array}{cccccccc}
1 & 1 & 0 & 1 & 0 & 1 & \cdots & \ \\
0 & 1 & 1 & 0 & 1 & 0 & \cdots & \ \\
0 & 0 & 1 & 1 & 0 & 1 & \cdots & \ \\
\vdots & \vdots & \vdots &\vdots & \vdots & \vdots & \vdots  & \vdots \\
0 & 0 & 0 & 0 & 0 & \cdots & 1 & 1 \\
0 & 0 & 0 & 0 & 0 & \cdots & 0 & 1 \\
\end{array} \right]\\
&=\left[ \begin{array}{ccccccccc}
1 & 1 & 0 & 1 & 0 & 1 & 0 & 1 & \ddots \\
1 & 2 & 1 & 1 & 1 & 1 & 1 & 1 & \ddots \\
0 & 1 & 2 & 1 & 1 & 1 & 1 & 1 & \ddots \\
1 & 1 & 1 & 3 & 1 & 2 & 1 & 2 & \ddots \\
0 & 1 & 1 & 1 & 3 & 1 & 2 & 1 & \ddots \\
1 & 1 & 1 & 2 & 1 & 4 & 1 & 3 & \ddots \\
0 & 1 & 1 & 1 & 2 & 1 & 4 & 1 & \ddots \\
1 & 1 & 1 & 2 & 1 & 3 & 1 & 5 & \ddots \\
\ddots & \ddots & \ddots & \ddots & \ddots & \ddots & \ddots & \ddots & \ddots \\
\end{array} \right]=\mx N_0,
\end{align*}
where the last row and column vectors are equal to 
\[
\left[ \begin{array}{cccccccccccc}
1 & \underline{1} & 1 & \underline{2} & 1 & \underline{3} & \cdots & \underline{\frac{n}{2}-2} & 1 & \underline{\frac{n}{2}-1} & 1 & \frac{n}{2}+1 \\
\end{array} \right]
\]
when $n$ is even and equal to
\[
\left[ \begin{array}{ccccccccccccc}
\underline{0} & 1 & \underline{1} & 1 & \underline{2} & 1 & \underline{3} & \cdots & \underline{\frac{n-1}{2}-2} & 1 & \underline{\frac{n-1}{2}-1} & 1 & \frac{n+1}{2} \\
\end{array} \right]
\]
when $n$ is odd. Clearly 
\[\rho(\mx Y_0\mx Y_0^T)=\rho(\mx N_0)\leq\matrixnorm{\mx N_0}_F,\]
where $\matrixnorm{\mx N_0}_F$ is the Frobenius norm of the matrix $\mx N_0$. It is now a cumbersome although an elementary task to show that
\[
\matrixnorm{\mx N_0}_F=\left\{ \begin{array}{ll}
\sqrt{\frac{1}{48}(n^4+56n^2+48n)} & \textrm{if}\ n\ \textrm{is even,}\\
\sqrt{\frac{1}{48}(n^4+50n^2+48n-51)} & \textrm{if}\ n\ \textrm{is odd.}\\
\end{array}\right.
\]
Then by replacing $C_n$ with $\rho(\mx N_0)$ and $T_n$ with $\matrixnorm{\mx M_0}_F$ in the proof of \ref{Mikan lemma} we are able to prove the following result:

\begin{theorem}\label{alarajan parannus}
If Conjecture \ref{konjektuuri} holds, then $\left(\frac{48}{n^4+56n^2+48n}\right)^{\frac{n-1}{2}}$ is a lower bound for $c_n$ when $n$ is even and $\left(\frac{48}{n^4+50n^2+48n-51}\right)^{\frac{n-1}{2}}$ is a lower bound for $c_n$ when $n$ is odd.
\end{theorem}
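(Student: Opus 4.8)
The plan is to mimic the proof of Theorem \ref{Mikan lemma} exactly, but with the sharper bounds that Conjecture \ref{konjektuuri} makes available. The conjecture asserts that $c_n=\kappa(\mx Y_0\mx Y_0^T)$, so the extremal matrix $\mx X_0$ realizing the constant is now explicitly $\mx Y_0$, and $\mx N_0=\mx Y_0\mx Y_0^T$ is its Gram matrix. First I would let $g(\lambda)=\det(\mx N_0-\lambda\mx I_n)$ be the characteristic polynomial and observe, exactly as before, that its constant term satisfies $a_0=g(0)=\det(\mx N_0)=\det(\mx Y_0)\det(\mx Y_0^T)=1$, since $\mx Y_0$ is lower triangular with unit diagonal. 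Since $\mx N_0$ is positive definite, I would order its eigenvalues $0<c_n=\lambda_1\leq\lambda_2\leq\cdots\leq\lambda_n=\rho(\mx N_0)$ and factor $g$ over the reals to obtain the product formula $1=a_0=\lambda_1\lambda_2\cdots\lambda_n$.

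The single substitution that drives the improvement is to bound each of the larger eigenvalues not by the crude upper bound $T_n$ for $C_n$, but by $\rho(\mx N_0)$ itself, and then to bound $\rho(\mx N_0)$ by the Frobenius norm. Concretely, I would write
\[
1=\lambda_1\lambda_2\cdots\lambda_n\leq c_n\cdot\rho(\mx N_0)^{n-1}\leq c_n\cdot\matrixnorm{\mx N_0}_F^{n-1},
\]
using $\lambda_n=\rho(\mx N_0)=\rho(\mx Y_0\mx Y_0^T)\leq\matrixnorm{\mx N_0}_F$ as noted in the text immediately preceding the theorem. Dividing through by $\matrixnorm{\mx N_0}_F^{n-1}>0$ yields
\[
c_n\geq\left(\frac{1}{\matrixnorm{\mx N_0}_F}\right)^{n-1}.
\]
At this point I would simply insert the two closed-form evaluations of $\matrixnorm{\mx N_0}_F$ already displayed in the preceding discussion, namely $\matrixnorm{\mx N_0}_F=\sqrt{\tfrac{1}{48}(n^4+56n^2+48n)}$ for even $n$ and $\matrixnorm{\mx N_0}_F=\sqrt{\tfrac{1}{48}(n^4+50n^2+48n-51)}$ for odd $n$. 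Substituting these into the bound and squaring inside the $(n-1)$-th root (so that $1/\matrixnorm{\mx N_0}_F^{n-1}=(1/\matrixnorm{\mx N_0}_F^2)^{(n-1)/2}$) gives precisely the two claimed expressions, and the claim follows.

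The only genuinely laborious step is the explicit computation of $\matrixnorm{\mx N_0}_F$, i.e. summing the squares of all entries of $\mx N_0$ in both parity cases; but the author has already carried this out in the paragraph before the statement and labelled it ``a cumbersome although an elementary task,'' so I am entitled to quote it and need not reproduce it. Thus the hard part has effectively been discharged in advance, and the proof of Theorem \ref{alarajan parannus} reduces to the short chain of inequalities above together with the stated norm values. I would simply remark that the argument is identical to that of Theorem \ref{Mikan lemma} with $C_n$ replaced by $\rho(\mx N_0)$ and $T_n$ replaced by $\matrixnorm{\mx N_0}_F$, as the text already indicates, and then record the two resulting bounds.
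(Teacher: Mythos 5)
Your proposal is correct and follows exactly the paper's argument: the paper proves this theorem precisely by re-running the proof of Theorem \ref{Mikan lemma} with $\mx M_0=\mx N_0=\mx Y_0\mx Y_0^T$, replacing the bound $\lambda_i\leq C_n\leq T_n$ by $\lambda_i\leq\rho(\mx N_0)\leq\matrixnorm{\mx N_0}_F$ and then inserting the two parity-dependent closed forms of $\matrixnorm{\mx N_0}_F$. The chain $1=\lambda_1\cdots\lambda_n\leq c_n\matrixnorm{\mx N_0}_F^{n-1}$ and the rewriting $\matrixnorm{\mx N_0}_F^{-(n-1)}=\left(\matrixnorm{\mx N_0}_F^{-2}\right)^{(n-1)/2}$ are exactly the intended steps, so nothing is missing.
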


The following Table \ref{taulukko} shows the behaviour of $c_n$ and its lower bounds for $1\leq n\leq7$.

\begin{table}[ht!]
	\centering
	\caption{Some values of the constant $c_n$ and its lower bounds.}\label{taulukko}
		\begin{tabular}{|c|c|c|c|}
			\hline
    $n$  & Lower bound  & Lower bound  & Approximate \\
    \    & by Theorem \ref{Mikan lemma} & by Theorem \ref{alarajan parannus} & value for $c_n$ \\
    \hline
    $1$ & $1$ & $1$ & $1$ \\
    $2$ & $0.377964$ & $0.377964$ & $0.381966$ \\
    $3$ & $0.0384615$ & $0.0769231$ & $0.198062$ \\
    $4$ & $0.00170747$ & $0.00674936$ & $0.0870031$ \\
    $5$ & $4.16233\cdot10^{-5}$ & $5.40833\cdot10^{-4}$ & $0.0370683$ \\
    $6$ & $6.36185\cdot10^{-7}$ & $2.05280\cdot10^{-5}$ & $0.0148276$ \\
    $7$ & $6.64148\cdot10^{-9}$ & $8.16298\cdot10^{-7}$ & $0.00581700$ \\
    \hline
		\end{tabular}
\end{table}

\noindent
\textbf{Acknowledgement} The author wishes to thank the referee for careful reading and for useful comments.

\end{document}